\documentclass{amsart}
\usepackage{amsmath,amsthm,amssymb,amsfonts,graphicx, hyperref}
\usepackage{color}

\title{A copositive formulation for the stability number of infinite graphs}

\author{Cristian Dobre}

\address{C.~Dobre, Biometris, Wageningen University and Research Center, 
6700 HB Wageningen, The Netherlands}
\email{cristian.dobre@wur.nl}

\author{Mirjam D\"{u}r}

\address{M.~D\"{u}r, Department of Mathematics, University of Trier,
  54286 Trier, Germany}
\email{duer@uni-trier.de}

\author{Leonhard Frerick}

\address{L.~Frerick, Department of Mathematics, University of Trier,
  54286 Trier, Germany}
\email{frerick@uni-trier.de}

\author{Frank Vallentin}

\address{F.~Vallentin, Mathematisches Institut, Universit\"at zu
  K\"oln, Weyertal 86--90, 50931 K\"oln, Germany}
\email{frank.vallentin@uni-koeln.de}

\thanks{C.~Dobre and M.~D\"{u}r were supported by Vici grant
  639.033.907 from the Netherlands Organisation for Scientific
  Research (NWO), F.~Vallentin was supported by Vidi grant 639.032.917
  from the Netherlands Organisation for Scientific Research (NWO)}

\subjclass{90C25, 46N10}

\keywords{Copositive cone of continuous Hilbert-Schmidt kernels, Completely
  positive cone of measures, stability number, extreme rays}

\date{October 3, 2015}

\newcommand{\defi}[1]{\textit{#1}}

\newcommand{\R}{\mathbb{R}}
\newcommand{\N}{\mathbb{N}}

\newtheorem{defin}{Definition}[section]
\newtheorem{definition}[defin]{Definition}
\newtheorem{proposition}[defin]{Proposition}
\newtheorem{theorem}[defin]{Theorem}

\newtheorem{lemma}[defin]{Lemma}

\DeclareMathOperator{\trace}{trace}

\DeclareMathOperator{\conv}{conv}

\DeclareMathOperator{\cone}{cone}
\DeclareMathOperator{\cl}{cl}
\DeclareMathOperator{\supp}{supp}
\DeclareMathOperator{\sym}{sym}
\DeclareMathOperator{\ex}{ex}

\begin{document}

\begin{abstract}
  In the last decade, copositive formulations have been proposed for a
  variety of combinatorial optimization problems, for example the
  stability number (independence number). In this paper, we generalize
  this approach to infinite graphs and show that the stability number
  of an infinite graph is the optimal solution of some
  infinite-dimensional copositive program. For this we develop a
  duality theory between the primal convex cone of copositive kernels
  and the dual convex cone of completely positive measures. We
  determine the extreme rays of the latter cone, and we illustrate
  this theory with the help of the kissing number problem.
\end{abstract}

\maketitle

\markboth{C.~Dobre, M.~D\"ur, L.~Frerick, F.~Vallentin}{Copositive formulation for the stability number of infinite graphs}

\section{Introduction}

One way to deal with problems in combinatorial optimization is to
(re-)formulate them as convex optimization problems. This is
beneficial because it allows a geometric interpretation of the
original combinatorial problem and because the convexity provides ways
to find bounds or even to certify optimality of solutions.

In the last decade copositive formulations have been proposed for many
$\mathrm{NP}$-hard problems; see the survey of D\"ur \cite{Duer} and
references therein. In these formulations the hardness is entirely
moved into the copositivity constraint. Therefore any progress in
understanding this constraint immediately provides new insights for a
variety of problems.

Bomze, D\"ur, de Klerk, Roos, Quist, and Terlaky
\cite{BomzeDuerDeKlerkQuistRoosTerlaky} were the first to give a
copositive formulation of an $\mathrm{NP}$-hard combinatorial problem,
namely the clique number of a graph. Similarly, de Klerk and Pasechnik
\cite{DeKlerkPasechnik} considered the stability number of a
graph. The \defi{stability number} of a finite, undirected, simple
graph $G = (V, E)$ is
\begin{equation*}
\alpha(G) = \max\{|S| : \text{$S$ is a stable set}\},
\end{equation*}
where $S \subseteq V$ is a \defi{stable set} if for all $x, y \in S$
we have $\{x,y\} \not\in E$. Finding the stability number of a graph
is a fundamental problem in combinatorial optimization and has many
applications. It is one of the most difficult $\mathrm{NP}$-hard
problems, in the sense that even providing an approximation of any
reasonable quality is $\mathrm{NP}$-hard, see
H\aa{}stad~\cite{Hastad}. In \cite[Theorem 2.2]{DeKlerkPasechnik} de
Klerk and Pasechnik gave the following copositive formulation: For
$V = \{1, \ldots, n\}$,
\begin{equation*}
\begin{array}{rll}
\alpha(G) = \min & t\\
&  t \in \R,\; K \in \mathcal{COP}_n,\\
& K(i,i) = t-1 & \text{for all $i \in V$},\\
& K(i,j) = -1 & \text{for all $\{i,j\} \not\in E$,}
\end{array}
\end{equation*}
where $\mathcal{COP}_n$ denotes the convex cone of copositive
$n \times n$-matrices. Recall that a real symmetric matrix
$K \in \R^{n \times n}$ is called \defi{copositive} if
\begin{equation}
\label{eq:copositive definition}
\sum_{i=1}^n \sum_{j=1}^n K(i,j) a_i a_j \geq 0 \; \text{ for all } a_1, \ldots, a_n \geq 0 .
\end{equation}
The space of symmetric matrices is equipped with the usual Frobenius
trace inner product
\begin{equation*}
\langle K, L \rangle = \trace(KL).
\end{equation*}
With this inner product the dual of the copositive cone is the cone of
\defi{completely positive matrices}
\[
\mathcal{CP}_n = \{L \in \mathbb{R}^{n \times n} : \text{$L$
  symmetric, $\langle K, L \rangle \geq 0$ for all $K \in
  \mathcal{COP}_n$}\}.
\]
In \cite[Theorem 3.1 (iii)]{HallNewman1962a} Hall and Newman
determined the extreme rays of this cone. They showed that a matrix
generates an extreme ray of~$\mathcal{CP}_n$ if and only if it is
of the form
\begin{equation*}
 aa^{\sf T} = \left(\sum_{i=1}^n a_ie_i \right) \left( \sum_{i=1}^n a_i e_i \right)^{\sf T} \quad \text{with $a_1, \ldots, a_n \geq 0$,}
\end{equation*}
where $e_1, \ldots, e_n$ is the standard basis of $\R^n$. We write
$aa^{\sf T}$ in this seemingly complicated form because it suggests a
way to generalize this to the infinite setting.

\medskip

The concept of stable sets in graphs is also useful in infinite
graphs, for instance to model geometric packing problems in metric
spaces. Let $V$ be a compact metric space with probability
measure~$\omega$, which is strictly positive on open sets, and
distance function~$d$. Finding the densest packing of balls with
radius~$r$ in~$V$ is equivalent to finding the stability number of the
graph $G = (V, E)$ where the vertices of~$G$ are the elements of the
compact metric space~$V$ and $\{x,y\} \in E$ iff $d(x,y) \in (0,2r)$.
For example, one can formulate the kissing number problem in this way
(see, e.g., the exposition by Pfender and
Ziegler~\cite{PfenderZiegler2004a}): Take $V$ to be the unit sphere
$S^{n-1} = \{x \in \R^n : x^{\sf T} x =1\}$, let $\omega$ be the
normalized, induced Lebesgue measure, $d$ the angular distance
$d(x,y) = \arccos x^{\sf T} y$, and $r = \pi/6$.

\medskip

The graphs defined above are compact topological packing graphs as
introduced by de Laat and Vallentin \cite{Laat}. The formal
definition is as follows.

\begin{definition}
\label{def:topological packing graph}
A graph whose vertex set is a Hausdorff topological space is called a
\emph{topological packing graph} if each finite clique is contained in
an open clique. An \emph{open clique} is an open subset of the vertex
set where every two vertices are adjacent.
\end{definition}

In the remainder of the paper we assume that $G = (V, E)$ is a compact
topological packing graph where the vertex set $V$ is
metrizable. Compactness implies that the stability number of $G$ is
finite.

\medskip

The aim of the present paper is to give a copositive formulation of
the stability number of compact topological packing graphs, and
thereby to initiate the study of copositive formulations also for
other combinatorial problems in a continuous setting.

\medskip

In order to go from the finite to the infinite setting we have to
provide the right infinite dimensional generalizations of copositive
and completely positive matrices. For this we will apply classical
notions and results from infinite dimensional convexity theory, see
the books by Barvinok~\cite{Barvinok}, by Simon~\cite{Simon}, or Rudin~\cite{Rudin}.

On the copositive side, the natural generalization of finite
$n \times n$-matrices (where rows and columns are indexed by
$\{1, \ldots, n\}$) are real-valued continuous \defi{Hilbert-Schmidt
  kernels} on the compact Hausdorff space $V$. The set of continuous
Hilbert-Schmidt kernels is defined as follows:
\begin{equation*}
C(V \times V)_{\sym} = \{K \colon V \times V \to \R : \text{$K$ is symmetric and continuous}\}.
\end{equation*}
Symmetry means here that for all $x, y \in V$ we have $K(x,y) =
K(y,x)$.

In analogy to (\ref{eq:copositive definition}) we call a kernel $K \in
C(V \times V)_{\sym}$ \defi{copositive} if
\begin{equation}
\label{eq:copositive integral definition}
\int_V \int_V K(x,y) f(x) f(y) \, d\omega(x) d\omega(y) \geq 0 \;
\text{ for all } f \in C(V)_{\geq 0},
\end{equation}
where $C(V)_{\geq 0}$ is the convex cone of all nonnegative continuous
functions on~$V$. We denote the convex cone of copositive kernels
by $\mathcal{COP}_V$.

Note that the set $\mathcal{COP}_V$ is independent of the choice of
$\omega$. This follows from Lemma~\ref{le:characterizationCopositive}
below. In the finite dimensional case, this corresponds to the fact
that $K \in \mathcal{COP}_n$ if and only if $DKD \in \mathcal{COP}_n$
for any diagonal matrix $D$ with strictly positive diagonal
elements. In the finite setting, this scaling invariance is relevant
when studying approximations of $\mathcal{COP}_n$,
cf.~\cite{DickinsonEtAl2013}.

Also note that a measure $\omega$ with the required property (strict
positivity on open sets) always exists: Let $V$ be compact and
metrizable. Then $V$ is separable, so there exists a sequence
$(x_n)_{n\in\N}$ which is dense in $V$. Choose a sequence $a_n > 0$
with $\sum_{n\in\N} a_n = 1$ and for any set $A\subset V$ define
$\omega(A) := \sum_{n\in\N} a_n \chi_A(x_n)$. Then $\omega$ is a
probability measure with the required property and consequently the
set $\mathcal{COP}_V$ is well-defined.

The dual space of $C(V \times V)_{\sym}$ equipped with the
supremum norm consists of all continuous linear functionals. By Riesz'
representation theorem it can be identified with the space of
symmetric, signed Radon measures $M(V \times V)_{\sym}$ equipped
with the total variation norm (see e.g.\ \cite[Chapter
2.2]{BergChristensenRessel1984}). A \emph{signed Radon measure} is the
difference of two Radon measures, where a \emph{Radon measure} $\mu$
is a locally finite measure on the Borel algebra satisfying
\emph{inner regularity}:
$\mu(B) = \sup \{ \mu(C) : C \subseteq B, \, C \text{ compact}\}$ for
each Borel set $B$. A Radon measure $\mu$ is \emph{symmetric} if
$\mu(A \times B) = \mu(B \times A)$ for all measurable
$A, B \subseteq V$.

Let $K$ be a continuous kernel and $\mu$ be a symmetric, signed Radon
measure on $V \times V$. The duality is given by the pairing
\begin{equation}
\label{eq:inner product}
\langle K, \mu \rangle = \int_{V \times V} K(x,y) \, d\mu(x,y).
\end{equation}
We endow the spaces with the weakest topologies compatible with the
pairing: the weak topology on $C(V \times V)_{\sym}$  and the weak*
topology on $M(V \times V)_{\sym}$. Then the dual cone of the cone $\mathcal{COP}_V$ of copositive kernels is a cone 
which we call the \defi{cone of completely positive measures},
\begin{equation*}
\mathcal{CP}_V = \{\mu \in M(V \times V)_{\sym} : \langle K, \mu \rangle
\geq 0 \; \text{for all $K \in \mathcal{COP}_V$}\}.
\end{equation*}

Using these definitions we can state our main theorem which gives a
copositive formulation of the stability number of compact topological
packing graphs.

\begin{theorem}
\label{th:main}
Let $G = (V,E)$ be a compact topological packing graph. Then the
stability number of $G$ equals
\begin{equation*}
\label{eq:copositive}
\tag{P}
\begin{array}{rll}
\alpha(G) = \inf & t\\
&  t \in \R,\; K \in \mathcal{COP}_V \\
& K(x,x) = t-1 & \text{for all $x \in V$}\\
& K(x,y) = -1 & \text{for all $\{x,y\} \not\in E$.}
\end{array}
\end{equation*}
\end{theorem}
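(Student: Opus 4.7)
The plan is to prove the two inequalities $\inf \geq \alpha(G)$ and $\inf \leq \alpha(G)$ separately.

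For $\inf \geq \alpha(G)$, I take a maximum stable set $S = \{x_1, \ldots, x_s\}$ with $s = \alpha(G)$ and any primal feasible $(t, K)$. Since $V$ is Hausdorff and $\omega$ is strictly positive on open sets, I can construct continuous bump functions $g_i^{(n)} \in C(V)_{\geq 0}$ with pairwise disjoint supports shrinking to $\{x_i\}$ as $n \to \infty$, each satisfying $\int_V g_i^{(n)}\, d\omega = 1$. Inserting $f_n = \sum_{i=1}^s g_i^{(n)}$ into the copositivity condition and expanding gives
\[
0 \leq \sum_{i,j=1}^s \int_V \int_V K(x,y)\, g_i^{(n)}(x)\, g_j^{(n)}(y)\, d\omega(x)\, d\omega(y),
\]
and passing to the limit $n \to \infty$ using continuity of $K$ yields $\sum_{i,j=1}^s K(x_i, x_j) \geq 0$. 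Since $K(x_i, x_i) = t - 1$ and $K(x_i, x_j) = -1$ for $i \neq j$ (because $S$ is stable), the sum equals $st - s^2$, giving $t \geq s = \alpha(G)$.

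For $\inf \leq \alpha(G)$, I construct, for each $\epsilon > 0$, a primal feasible $(t, K)$ with $t = \alpha(G) + \epsilon$. The set $N = \{(x,y) \in V \times V : x \neq y,\ \{x,y\} \notin E\}$ of off-diagonal non-edges is closed in $V \times V$: a sequence in $N$ converging to a diagonal point $(x, x)$ would eventually lie in an open clique around $x$ (which exists because $\{x\}$ is a finite clique in a topological packing graph), contradicting non-adjacency. Fix a metric compatible with the topology of $V$, cover $V$ by finitely many open cliques using compactness, and take a Borel partition $\{B_i\}_{i=1}^N$ of $V$ into pieces of sufficiently small diameter that each closure $\overline{B_i}$ lies inside some open clique. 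Let
\[
A = \bigcup \bigl\{ \overline{B_i} \times \overline{B_j} : i = j \text{ or } (i \neq j \text{ and } \overline{B_i} \times \overline{B_j} \subseteq E \cup \Delta) \bigr\},
\]
a closed subset of $E \cup \Delta$ disjoint from $N$. By Urysohn's lemma there exists a continuous symmetric $\phi \colon V \times V \to [0,1]$ with $\phi = 1$ on $A$ and $\phi = 0$ on $N$. I set $K = (\alpha(G) + \epsilon)\phi - 1$, which is continuous and symmetric, satisfies $K(x, x) = \alpha(G) + \epsilon - 1$ (since $(x, x) \in A$), and has $K(x, y) = -1$ on $N$, so the equality constraints of the primal hold.

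Copositivity of $K$ reduces to the continuous Motzkin--Straus-type estimate
\[
\int_V \int_V \phi(x, y)\, f(x)\, f(y)\, d\omega(x)\, d\omega(y) \geq \frac{(\int_V f\, d\omega)^2}{\alpha(G)}
\]
for every $f \in C(V)_{\geq 0}$. Setting $a_i = \int_{B_i} f\, d\omega$, the inequality $\phi \geq \mathbf{1}_A$ lower-bounds the left-hand side by $a^{\sf T}(I + A_H)\, a$, where $H$ is the finite graph on $\{1, \ldots, N\}$ with edges $\{i, j\}$ iff $i \neq j$ and $\overline{B_i} \times \overline{B_j} \subseteq E \cup \Delta$; the classical Motzkin--Straus theorem applied to $H$ gives $(\sum_i a_i)^2 / \alpha(H) = (\int_V f\, d\omega)^2 / \alpha(H)$. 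The main obstacle is to show $\alpha(H) \leq \alpha(G)$ for sufficiently fine partitions. I expect this via a compactness argument: were it to fail along a sequence of refining partitions, each $H_n$ would contain a size-$(\alpha(G) + 1)$ stable set, and for every pair of its members a non-adjacency witness in the closed set $V \times V \setminus E$; passing to convergent subsequences of representatives produces $\alpha(G) + 1$ points in $V$ that are pairwise non-adjacent. Distinctness of these limit points follows because two shrinking pieces whose representatives converge to a common limit would eventually both fit inside a common open clique around that limit, forcing the product of the two pieces into $E \cup \Delta$ and contradicting the chosen witness. Granted $\alpha(H) \leq \alpha(G)$, the estimate yields $\int_V \int_V K(x,y)\, f(x)\, f(y)\, d\omega(x)\, d\omega(y) \geq (\epsilon / \alpha(G))\, (\int_V f\, d\omega)^2 \geq 0$, so $K$ is copositive; letting $\epsilon \to 0$ gives $\inf \leq \alpha(G)$.
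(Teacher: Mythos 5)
Your proof is correct in outline, but it takes a genuinely different route from the paper. The paper never argues directly on the primal: it casts \eqref{eq:copositive} into Barvinok's conic-duality framework, derives the dual completely positive program \eqref{eq:completelypositive} over measures, proves in Theorem~\ref{th:completelypositive} that its optimum is attained and equals $\alpha(G)$ (using the extreme-ray description of $\mathcal{CP}_V$ from Theorem~\ref{th:extreme rays}, which rests on Choquet theory and Milman's converse), and then closes the duality gap via a cone-closedness condition (Lemmas~\ref{l1} and~\ref{l2}). Your argument is more elementary and self-contained: the lower bound $t \ge \alpha(G)$ is exactly the finite-point characterization of copositivity (Lemma~\ref{le:characterizationCopositive}, or Lemma~\ref{lem:pfender} with all weights equal) evaluated on a maximum stable set, and the upper bound is an explicit $\varepsilon$-optimal feasible kernel built from a fine partition into pieces contained in open cliques, Urysohn's lemma, and the finite Motzkin--Straus theorem. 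The paper's route buys considerably more than the value of (P) --- the dual program (D), attainment there, the extreme rays of $\mathcal{CP}_V$, and strong duality, all reused for the weighted version and the kissing-number discussion --- while your route avoids measure-theoretic duality entirely and exhibits near-optimal kernels of Motzkin--Straus type, which is a nice constructive byproduct.

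Two points should be tightened. First, your argument that $N$ is closed only treats sequences converging to diagonal points; you also need that every $(x,y)\in E$ has a neighborhood inside $E\cup\Delta$, which follows from the same packing-graph axiom applied to the $2$-clique $\{x,y\}$ (so $E\cup\Delta$ is open and $N$ is closed). Second, the inequality $\alpha(H)\le\alpha(G)$ for sufficiently fine partitions is the heart of your upper bound and is only sketched. The sketch is sound: for a putative stable set of size $\alpha(G)+1$ in $H_n$, choose for each pair a witness $(x^n_{lm},y^n_{lm})\in N$, pass to convergent subsequences of block representatives (the witnesses then converge to the same limits because the mesh tends to zero), and use closedness of $N$ --- which already excludes the diagonal, so the limit points are automatically distinct and pairwise non-adjacent, yielding a stable set of size $\alpha(G)+1$ in $G$, a contradiction. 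This is where the topological packing graph hypothesis is genuinely used, so it deserves to be written out in full.
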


The remainder of the paper is organized as follows: In
Section~\ref{sec:cones} we analyze properties of the two
infinite-dimensional cones $\mathcal{COP}_V$ and $\mathcal{CP}_V$; we
give a characterization of copositive kernels and we determine the
extreme rays of the cone $\mathcal{CP}_V$ of completely positive
measures. In Section~\ref{sec:formulation} we prove our main result,
Theorem~\ref{th:main}. There we first derive a completely positive
formulation of the stability number --- which we will denote
by~\eqref{eq:completelypositive} ---, that is the dual
of~\eqref{eq:copositive}. Then by proving that there is no duality gap
between the primal and the dual we derive Theorem~\ref{th:main}.  We
also give a version of Theorem~\ref{th:main} for the the weighted
stability number. In Section~\ref{sec:kissing} we provide an
interpretation of our copositive formulation for the kissing number
problem. Then we end by posing a question for possible future work.

\section{Copositive kernels and completely positive measures}
\label{sec:cones}

\subsection{Copositive kernels}

In \eqref{eq:copositive integral definition} we defined a kernel to be
copositive by integrating it with nonnegative continuous functions.
Instead of using nonnegative continuous functions we can also define
copositivity by means of finite nonnegative delta measures.  For the
larger class of positive definite kernels this is a classical fact
which holds under the same assumptions on $V$ and $\omega$ as imposed
here, as realized for instance by Bochner \cite[Lemma 1]{Bochner},
see also Folland \cite[Proposition 3.35]{Folland1995a}:

A kernel $K \in C(V \times V)_{\sym}$ is called \emph{positive (semi-)definite}  if
\begin{equation*}
\int_V \int_V K(x,y) f(x) f(y) \, d\omega(x) d\omega(y) \geq 0 \;
\text{ for all } f \in C(V),
\end{equation*}
where $C(V)$ denotes the space of continuous functions. Bochner~\cite{Bochner}
proved that a kernel $K \in C(V \times V)_{\sym}$ is positive semidefinite
if and only if for any choice $x_{1}, \ldots, x_{N}$ of finitely many
points in $V$, the matrix $(K(x_{i},x_{j}))_{i,j=1}^{N}$ is positive
semidefinite.

The following lemma shows that a similar characterization holds
for copositive kernels. For the reader's convenience we provide a proof here.

\begin{lemma}
\label{le:characterizationCopositive}
Let $V$ be a compact space with probability measure $\omega$ which is
strictly positive on open sets. A kernel $K \in C(V \times V)_{\sym}$ is copositive if and only if for any choice of 
finitely many points $x_{1},\ldots, x_{N} \in V$,
the matrix $(K(x_{i},x_{j}))_{i,j=1}^{N}$ is copositive.
\end{lemma}

\begin{proof}
Since $V \times V$ is compact, the continuous function $K$ is
uniformly continuous and bounded on $V \times V$.

Suppose that for any choice $x_{1},\ldots, x_{N}$ of finitely many 
points in $V$, the matrix $(K(x_{i},x_{j}))_{i,j=1}^{N}$ is copositive. 
Let $\varepsilon > 0$ and $f \in C(V)_{\geq 0}$. Since $K$ is 
uniformly continuous, we can partition~$V$ into a finite number of 
measurable sets $V_{1},\ldots,V_{N}$ and find points $x_i \in V_i$ such that
\begin{equation}
\label{eq:partition}
|K(x,y) - K(x_i, x_j)| \leq \varepsilon \quad \text{for all $x \in
  V_i$, $y \in V_j$}.
\end{equation}
Set $a_{i}=\int_{V_i} f(x) \, d\omega(x)$. Then $a_i \ge 0$ and
\begin{align*}
& \left|
    \sum_{i=1}^{N}\sum_{j=1}^{N}K(x_{i},x_{j})a_{i}a_{j}-\int_{V}\int_{V}K(x,y)f(x)f(y)
    \, d\omega(x)d\omega(y) \right|  \\
&= \left| \sum_{i=1}^{N}\sum_{j=1}^{N}
  \int_{V_{i}}\int_{V_{j}}(K(x_{i},x_{j})-K(x,y))f(x)f(y) \, d\omega(x)d\omega(y)  \right|  \\
&\leq  \sum_{i=1}^{N}\sum_{j=1}^{N} \int_{V_{i}}\int_{V_{j}} \left|
  K(x_{i},x_{j})-K(x,y) \right| f(x)f(y) \, d\omega(x)d\omega(y) \\
&\leq  \varepsilon \int_{V}\int_{V}f(x)f(y) \, d\omega(x)d\omega(y)
  \to 0
\end{align*}
as $\varepsilon \to 0$. One obtains 
\[
\int_{V}\int_{V} K(x,y)f(x)f(y) \, d\omega(x)d\omega(y) \geq 0,
\]
and hence $K$ is copositive.

Conversely, assume $K$ is copositive. Then, inequality
\eqref{eq:copositive integral definition} holds also for the larger
class of integrable functions $f \in L^1(V)$ which are nonnegative
$\omega$-almost everywhere. This follows since the family of
nonnegative continuous functions lies dense in the family of
$\omega$-almost everywhere nonnegative $L^1$-functions. Let
$x_{1},\ldots,x_{N} \in V$ (we may assume the $x_i$'s are pairwise
different), let $a_{1},\ldots,a_{N} \geq 0$ and let
$\varepsilon > 0$. We construct disjoint open neighborhoods $V_i$ of
$x_{i}$ such that~\eqref{eq:partition} holds. By assumption, since
$\omega$ is strictly positive on open sets, we have $\omega(V_i) > 0$.
Thus the integrable function
\begin{equation*}
f(x)= \left \{
\begin{array} {cl}
\frac{a_i}{\omega(V_i)} & \text{if $x \in V_{i}$,} \\
0 & \text{otherwise,}
\end{array}
\right.
\end{equation*}
is nonnegative on $V$, and $K(x_{i},x_{j}) a_i a_j$ can be expressed as
\begin{equation*}
 K(x_i,x_j) a_i a_j = \int_{V_i}\int_{V_j} K(x_i, x_j) f(x) f(y) \, d\omega(x)d\omega(y).
\end{equation*}
Then
\[
\left|
  \sum_{i=1}^{N}\sum_{j=1}^{N}K(x_{i},x_{j})a_{i}a_{j}-\int_{V}\int_{V}K(x,y)f(x)f(y)
  \, d\omega(x)d\omega(y) \right|
\leq  \varepsilon \sum_{i=1}^{N} \sum_{j=1}^{N} a_{i}a_{j}.
\]
By letting $\varepsilon$ tend to zero, one obtains $\sum_{i=1}^{N}
\sum_{j=1}^{N} K(x_i,x_j) a_i a_j \geq 0$, which concludes the proof.
\end{proof}

An alternative characterization of copositive kernels was first noted
by Pfender \cite[Lemma 3.3]{Pfender} in the case when $V$ is the unit
sphere $S^{n-1}$ and for copositive kernels which are invariant under
the orthogonal group. In fact, this holds in general.

\begin{lemma}
\label{lem:pfender}
A kernel $K \in C(V \times V)_{\sym}$ 
is copositive if and only if for any choice of finitely many
points $x_1,\ldots, x_N$ in $V$, the sum $\sum_{i=1}^N \sum_{j =1}^N
K(x_i,x_j)$ is nonnegative.
\end{lemma}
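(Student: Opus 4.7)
The plan is to prove both directions, using Lemma \ref{le:characterizationCopositive} as the bridge between the integral definition of copositivity and finite point-mass conditions.

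For the forward direction, assume $K$ is copositive. By Lemma \ref{le:characterizationCopositive}, for any $x_1, \ldots, x_N \in V$, the matrix $(K(x_i,x_j))_{i,j=1}^N$ is copositive in the classical (finite-dimensional) sense. Plugging the all-ones vector $a=(1,\ldots,1)$ into the definition \eqref{eq:copositive definition} immediately yields $\sum_{i,j} K(x_i,x_j) \geq 0$.

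For the converse, assume $\sum_{i,j} K(x_i,x_j) \geq 0$ for every finite tuple of points. By Lemma \ref{le:characterizationCopositive} again, it suffices to prove that $\sum_{i,j} K(x_i,x_j) a_i a_j \geq 0$ for any $x_1,\ldots,x_N \in V$ and any $a_1,\ldots,a_N \geq 0$. The key idea is a \emph{replication trick}: first handle rational weights. Suppose $a_i = p_i/q$ with $p_i \in \mathbb{N}$ and a common denominator $q \in \mathbb{N}$. Form a new tuple $(y_1,\ldots,y_M)$ by listing each $x_i$ exactly $p_i$ times, where $M = \sum_i p_i$. Then
\[
q^2 \sum_{i,j=1}^N K(x_i,x_j)\, a_i a_j = \sum_{i,j=1}^N p_i p_j K(x_i,x_j) = \sum_{k,\ell=1}^M K(y_k,y_\ell) \geq 0,
\]
where the final inequality is the hypothesis applied to the replicated tuple. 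Crucially, the hypothesis allows repeated points, so this is legitimate.

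To pass from rational to real nonnegative coefficients, note that for fixed points $x_1,\ldots,x_N$, the quadratic form $(a_1,\ldots,a_N) \mapsto \sum_{i,j} K(x_i,x_j) a_i a_j$ is a continuous function on $\R^N$. Since $\mathbb{Q}_{\geq 0}^N$ is dense in $\R_{\geq 0}^N$, the nonnegativity extends to all nonnegative real tuples. This shows that each finite matrix $(K(x_i,x_j))_{i,j=1}^N$ is copositive, and so Lemma \ref{le:characterizationCopositive} yields copositivity of $K$. The main obstacle is really just setting up the replication argument cleanly; no deep analytic input beyond continuity of $K$ is needed, since uniform continuity and the approximation argument are already encapsulated in Lemma \ref{le:characterizationCopositive}.
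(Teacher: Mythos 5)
Your proof is correct, but the converse direction takes a genuinely different route from the paper. The forward direction is identical: apply Lemma~\ref{le:characterizationCopositive} and plug in the all-ones vector. For the converse, however, the paper does not go back through Lemma~\ref{le:characterizationCopositive} at all; instead it argues probabilistically. Given $f \in C(V)_{\geq 0}$ normalized to a probability density, it samples $x_1,\ldots,x_N$ independently from $f\,d\omega$ and computes
\[
0 \le \E\Bigl[\tfrac{1}{N^2}\textstyle\sum_{i,j} K(x_i,x_j)\Bigr]
= \tfrac{1}{N}\int_V K(x,x) f(x)\,d\omega(x) + \tfrac{N-1}{N}\int_V\int_V K(x,y)f(x)f(y)\,d\omega(x)\,d\omega(y),
\]
and lets $N \to \infty$ so that the diagonal term vanishes and the double integral is forced to be nonnegative. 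Your replication-plus-rational-approximation argument instead upgrades the all-ones condition to full matrix copositivity of every $(K(x_i,x_j))_{i,j}$ and then invokes the ``if'' direction of Lemma~\ref{le:characterizationCopositive}. Both are valid. Your version is more elementary and purely combinatorial (no probability, and the only analytic input is already packaged in Lemma~\ref{le:characterizationCopositive}), and it makes explicit a point the paper leaves implicit: the hypothesis must be read as allowing repeated points in the tuple $x_1,\ldots,x_N$ --- your replication step uses this overtly, while the paper's sampling argument uses it tacitly, since independent samples may coincide. The paper's argument is shorter and bypasses matrix copositivity entirely, which is why it can be stated for general $V$ exactly as in Pfender's original lemma.
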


\begin{proof}
  Assume that $K$ is copositive and take any finite set of points
  $x_1,\ldots, x_N \in V$.  Then choosing all $a_i$'s in the previous
  lemma equal to one gives 
\[
\sum_{i=1}^N \sum_{j =1}^N K(x_i,x_j) \ge 0.
\]

To show the converse, let $f \in C(V)_{\geq 0}$. By scaling we may
assume that $f$ is a probability density function on~$V$, i.e.\
$\int_V f(x) \, d\omega(x) = 1$.  Picking points
$x_1, \ldots, x_N \in V$ independently at random from this
distribution gives
\begin{align*}
0 &\le \mathbb{E} \left[\frac{1}{N^2} \sum_{i=1}^N \sum_{j=1}^N K(x_i,x_j)\right]\\
&= \frac{1}{N} \int_V K(x,x) f(x) \, d\omega(x) +
\frac{N-1}{N} \int_V \int_V K(x,y) f(x) f(y) \, d\omega(x) d\omega(y).
\end{align*}
By letting $N$ tend to infinity, we see that the double integral is
nonnegative, and hence $K$ is copositive.
\end{proof}

\subsection{Completely positive measures}

In the finite setting, the rank-$1$-matrices
\[
aa^{\sf T} = \left(\sum_{i=1}^n a_i e_i\right)\left(\sum_{i=1}^n a_i
  e_i\right)^{\sf T}
\quad \text{with} \quad
a_1, \ldots, a_n \geq 0,
\]
where $e_1, \ldots, e_n$ is the standard basis of $\R^n$, determine
all extreme rays of the cone of completely positive
matrices~$\mathcal{CP}_n$. So we have an explicit description of this
cone,
\begin{equation*}
\mathcal{CP}_n = \cone\left\{aa^{\sf T} : a_1, \ldots, a_n \geq 0\right\}.
\end{equation*}
In a sense, this fact generalizes to the infinite setting as we shall
show soon in Theorem~\ref{th:extreme rays}.

We have to find the proper replacement of the rank-$1$-matrices
$aa^{\sf T}$.  We will show next that delta measures of the form
\begin{equation}
\label{eq:delta}
\sum_{i=1}^N a_i \delta_{x_i} \otimes \sum_{i=1}^N a_i
\delta_{x_i} \quad \text{with}  \quad x_1, \ldots, x_N \in V, \; a_1,
\ldots, a_N \geq 0
\end{equation}
defined by
\[
\left\langle K, \sum_{i=1}^N a_i
\delta_{x_i} \otimes \sum_{i=1}^N a_i \delta_{x_i}
\right\rangle = \sum_{i=1}^N\sum_{j=1}^N a_i a_j K(x_i, x_j) \quad \text{ for  $K \in C(V \times V)_{\sym}$}
\]
play a similar role as the rank-$1$-matrices.

\begin{proposition}
\label{prop:closure}
The cone of completely positive measures equals
\[
\mathcal{CP}_V = \cl \cone\left\{\sum_{i=1}^N a_i \delta_{x_i} \otimes
\sum_{i=1}^N a_i \delta_{x_i} : N \in \mathbb{N}, \; x_i \in V, \;
a_i \geq 0\right\},
\]
where the closure is taken with respect to the weak* topology.
\end{proposition}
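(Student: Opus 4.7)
The plan is to establish this as a bipolar-theorem argument in the dual pair $(C(V\times V)_{\sym}, M(V\times V)_{\sym})$ equipped with the pairing in \eqref{eq:inner product} and the weak* topology on the measure side. Denote the conic hull inside the closure by
\[
D = \cone\!\left\{\textstyle\sum_{i=1}^N a_i \delta_{x_i} \otimes \sum_{i=1}^N a_i \delta_{x_i} : N \in \N,\ x_i \in V,\ a_i \geq 0\right\}.
\]
We aim to show $\cl D = \mathcal{CP}_V$.

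For the inclusion $\cl D \subseteq \mathcal{CP}_V$, I would fix a generator $\nu = \sum_i a_i \delta_{x_i} \otimes \sum_i a_i \delta_{x_i}$ and an arbitrary $K \in \mathcal{COP}_V$. By Lemma~\ref{le:characterizationCopositive}, the finite matrix $(K(x_i,x_j))_{i,j}$ is copositive, so $\langle K, \nu\rangle = \sum_{i,j} a_i a_j K(x_i,x_j) \geq 0$. Hence $D \subseteq \mathcal{CP}_V$. Since $\mathcal{CP}_V$ is defined as the dual cone of $\mathcal{COP}_V$, it is an intersection of weak*-closed half-spaces and therefore weak*-closed; taking closures gives $\cl D \subseteq \mathcal{CP}_V$.

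For the reverse inclusion I would argue by contradiction using Hahn--Banach separation. Suppose $\mu \in \mathcal{CP}_V \setminus \cl D$. Since $\cl D$ is a nonempty weak*-closed convex cone not containing $\mu$, there is a weak*-continuous linear functional strictly separating $\mu$ from $\cl D$; in the given dual pairing such a functional is represented by some $K \in C(V \times V)_{\sym}$, so we may assume
\[
\langle K, \nu\rangle \geq 0 \text{ for all } \nu \in \cl D, \qquad \langle K, \mu\rangle < 0.
\]
Applying the first inequality to the generators $\sum_i a_i \delta_{x_i} \otimes \sum_i a_i \delta_{x_i}$ yields $\sum_{i,j} a_i a_j K(x_i,x_j) \geq 0$ for every finite choice of points in $V$ and nonnegative scalars. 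Lemma~\ref{le:characterizationCopositive} then forces $K \in \mathcal{COP}_V$. But then $\mu \in \mathcal{CP}_V$ demands $\langle K, \mu\rangle \geq 0$, a contradiction.

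The main obstacle I anticipate is purely one of bookkeeping: confirming that the separating functional can be chosen in $C(V\times V)_{\sym}$ (rather than in the algebraic dual of $M(V\times V)_{\sym}$). This is exactly what the choice of the weak* topology guarantees, since weak*-continuous linear functionals on the dual space are precisely the evaluations against elements of the predual, so the separation step is justified. Once that point is in place, Lemma~\ref{le:characterizationCopositive} does all the work of converting the finite-point inequality supplied by the separator into membership in $\mathcal{COP}_V$.
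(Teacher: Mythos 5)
Your proof is correct and follows essentially the same route as the paper: the easy inclusion via Lemma~\ref{le:characterizationCopositive} plus weak*-closedness of the dual cone, and the reverse inclusion by Hahn--Banach separation in the dual pair, with the separating functional realized as a kernel $K \in C(V\times V)_{\sym}$ that Lemma~\ref{le:characterizationCopositive} then certifies as copositive. The only difference is cosmetic (you phrase the second half as a contradiction where the paper argues the contrapositive), and your remark about why the separator lives in the predual is exactly the point the paper also relies on.
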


\begin{proof}
  One inclusion is straightforward: By definition, the
  cone~$\mathcal{CP}_V$ of completely positive measures is closed, and
  by Lemma~\ref{le:characterizationCopositive} delta measures of the
  form~\eqref{eq:delta} lie in~$\mathcal{CP}_V$.

  For the other inclusion we use the Hahn-Banach theorem for locally
  convex topological vector spaces. For this note that
  $M(V \times V)_{\sym}$ with the weak* topology is a locally convex
  topological space, and all continuous linear functionals are given
  by $\langle K, \cdot \rangle$ for some $K \in C(V \times
  V)_{\sym}$. Take
\[ 
\mu \in M(V \times V)_{\sym} \setminus \cl \cone\left\{\sum_{i=1}^N
  a_i \delta_{x_i} \otimes \sum_{i=1}^N a_i \delta_{x_i} : N \in
  \mathbb{N}, \; x_i \in V, \; a_i \geq 0\right\}.
\]
By Hahn-Banach there exists a kernel $K \in C(V \times V)_{\sym}$ such
that $\langle K, \mu \rangle < 0$ and
\[
\left\langle K, \sum_{i=1}^N a_i \delta_{x_i} \otimes
  \sum_{i=1}^N a_i \delta_{x_i} \right\rangle 
  \geq 0 \quad \text{ for all $N \in \N, x_i \in V$ and $a_i \ge 0$ ($i=1,\ldots,N$).} 
\]
Hence, again by Lemma~\ref{le:characterizationCopositive}, the
kernel~$K$ is copositive and therefore $\mu \not\in \mathcal{CP}_V$.
\end{proof}

However, it turns out that we really need to take the closure in the
statement of Proposition~\ref{prop:closure}. In particular, the set of
extreme rays of the cone of completely positive measures is strictly
larger than the set of delta measures given in the proposition. The
set of extreme rays consists of all product measures of
$\mu \times \mu$ where $\mu$ is a nonnegative measure on $V$:

\begin{theorem}
\label{th:extreme rays}
A measure generates an extreme ray of the cone $\mathcal{CP}_V$ of
completely positive measures if and only if it is a product measure of
the form $\mu \otimes \mu$, where $\mu \in M(V)$ is a nonnegative
measure on $V$.
\end{theorem}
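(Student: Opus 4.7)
The plan is to convert extreme rays of $\mathcal{CP}_V$ into extreme points of the compact convex base $\mathcal{B}$ from Proposition~\ref{prop:compact}. Every generator $\sum_i a_i \delta_{x_i} \otimes \sum_i a_i \delta_{x_i}$ with $\sum_i a_i = 1$ has total mass $(\sum_i a_i)^2 = 1$ on $V \times V$; this value is preserved under convex combinations and, by testing against the constant kernel $K \equiv 1$, also under weak* limits. Hence $\mathcal{B}$ lies in the affine hyperplane $\{\mu \in M(V \times V)_{\sym} : \mu(V \times V) = 1\}$, and together with $\mathcal{CP}_V = \bigcup_{\lambda \geq 0} \lambda \mathcal{B}$ this makes $\mathcal{B}$ a base of the cone. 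Extreme rays of $\mathcal{CP}_V$ then correspond bijectively to extreme points of $\mathcal{B}$.

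\textbf{Extreme points are product measures.} For the ``only if'' direction I would apply Milman's converse to the Krein-Milman theorem: every extreme point of $\mathcal{B}$ lies in the weak* closure of the generating set. Each generator equals $\mu_k \otimes \mu_k$ for a probability measure $\mu_k = \sum_i a_i^k \delta_{x_i^k}$ on $V$, and by Banach-Alaoglu a subnet of $(\mu_k)$ converges weak* to some probability measure $\mu \in M(V)$. The main technical step is to upgrade this to $\mu_k \otimes \mu_k \to \mu \otimes \mu$ in the weak* topology on $M(V \times V)_{\sym}$: for product kernels $K(x,y) = f(x) g(y)$ this is multiplicativity of scalar limits, and an $\varepsilon/3$ argument based on the uniform bound $\|\mu_k \otimes \mu_k\| = 1$ and the Stone-Weierstrass density of finite sums of product kernels in $C(V \times V)_{\sym}$ extends it to arbitrary continuous~$K$. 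Hence every extreme ray is generated by some $\mu \otimes \mu$ with $\mu$ a nonnegative measure on~$V$.

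\textbf{Product measures are extreme.} Conversely, suppose $\mu \otimes \mu = \nu_1 + \nu_2$ with $\nu_1, \nu_2 \in \mathcal{CP}_V$. Members of $\mathcal{CP}_V$ are nonnegative (the generators are, and this is a weak*-closed condition), so $0 \leq \nu_1 \leq \mu \otimes \mu$, and Radon-Nikodym produces a symmetric measurable $g \colon V \times V \to [0,1]$ with $d\nu_1 = g \, d(\mu \otimes \mu)$. The positive semidefinite kernels $\phi(x)\phi(y)$ with $\phi \in C(V)$ are copositive, so pairing them against $\nu_1$ shows that the integral operator $T_g$ on $L^2(\mu)$ with kernel $g$ is positive, and similarly so is $T_{1-g}$. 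Their sum $T_1$ is the rank-one positive operator $\phi \mapsto (\int_V \phi \, d\mu) \cdot \mathbf{1}$, and since rank-one positive operators generate extreme rays of the positive operator cone, $T_g = c \, T_1$ for some $c \in [0,1]$. This forces $g \equiv c$ almost everywhere with respect to $\mu \otimes \mu$, hence $\nu_1 = c\,(\mu \otimes \mu)$, proving that $\mu \otimes \mu$ spans an extreme ray. The chief obstacle I foresee is the continuity step in the second paragraph, since weak* convergence $\mu_k \to \mu$ on $V$ does not automatically transfer to the products on $V \times V$; the Stone-Weierstrass reduction to product kernels plus the uniform mass bound is what makes it work.
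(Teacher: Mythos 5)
Your proof is correct; the first half follows the paper's route, but the second half is a genuinely different argument. For the ``only if'' direction both you and the paper pass to the compact base $\mathcal{B}$ and invoke Milman's converse; the paper simply asserts that $\mathcal{K}_1 = \{\mu \otimes \mu : \mu \in \mathcal{M}^+_1(V)\}$ is weak* compact, whereas you make explicit the key point behind that assertion, namely the weak* continuity of $\mu \mapsto \mu \otimes \mu$ on probability measures via Stone--Weierstrass and the uniform mass bound --- a step the paper leaves implicit, so spelling it out is a genuine improvement. For the converse (product measures are extreme), the paper argues inside the base: it writes $\mu \otimes \mu = \tfrac{1}{2}(\nu_1 + \nu_2)$, applies Choquet's theorem to represent each $\nu_i$ by a probability measure $P_i$ on $\mathcal{K}_1$, tests against the kernels $\tfrac{1}{2}(f \otimes \mathbf{1}_V + \mathbf{1}_V \otimes f)$ and $f \otimes f$ to obtain $\int \rho(f)\,dP = \mu(f)$ and $\int \rho(f)^2\,dP = \mu(f)^2$, and uses the equality case of Cauchy--Schwarz together with a countable dense subset of $C(V)_{\geq 0}$ to force $\rho = \mu$ $P$-almost everywhere. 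You instead work directly in the cone: from $0 \leq \nu_1 \leq \mu \otimes \mu$ you extract a Radon--Nikodym density $g$, observe that copositivity of the rank-one kernels $\phi \otimes \phi$ makes $T_g$ and $T_{1-g}$ positive operators on $L^2(\mu)$ summing to the rank-one operator $T_1$, and conclude $g \equiv c$. Both arguments are sound. Yours is more elementary in that it avoids Choquet theory altogether and, for this direction, needs neither metrizability of $V$ nor separability of $C(V)$ (the paper needs both, to apply Choquet's theorem and to choose the countable dense set $H$); the paper's argument has the advantage of staying entirely within the measure-theoretic framework and exhibiting the representing measure explicitly. Two small points to tie up in your write-up: note explicitly that every $\mu \otimes \mu$ with $\mu \geq 0$ actually lies in $\mathcal{CP}_V$ (it is a weak* limit of generators by your own continuity argument, and the cone is closed), and that $T_g$ is bounded --- indeed Hilbert--Schmidt, since $0 \leq g \leq 1$ and $\mu$ is finite --- so that positivity on the dense subspace $C(V)$ extends to all of $L^2(\mu)$.
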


Before proving the theorem, we first want to describe our strategy. We
start by cutting $\mathcal{CP}_V$ into compact convex slices
$\lambda \mathcal{B}$ where $\mathcal{B}$ is the closure of the convex
hull of all product measures of finitely supported probability
measures (Lemma~\ref{l1} and Proposition~\ref{prop:compact}). Then we
consider in the proof of Theorem~\ref{th:extreme rays} set
$\mathcal{K}_1$ which consists of all product measures of all
probability measures on~$V$. It is clear that $\mathcal{B}$ equals
$\cl \conv \mathcal{K}_1$. From Milman's converse of the Krein-Milman
theorem (Theorem~\ref{the:Milman}) we get immediately that extreme
points of $\mathcal{B}$ are contained in $\mathcal{K}_1$. Proving the
converse inclusion requires work. For this we rely on Choquet's
theorem (Theorem~\ref{the:Choquet}).

\medskip

The following general lemma is a slight variation of
\cite[Lemma~III.2.10]{Barvinok} where we do not use the convexity
assumption.

\begin{lemma}\label{l1}
  Let $\mathcal{B}$ be a compact set in a topological vector space such that
  $0 \not \in \mathcal{B}$. Then the set $\mathcal{K}$ defined by the union
  $\mathcal{K} = \bigcup_{\lambda \geq 0} \lambda \mathcal{B}$ is closed.
\end{lemma}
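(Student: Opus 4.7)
The plan is to prove closedness directly via a net argument. Take any net $(x_\alpha) = (\lambda_\alpha b_\alpha)$ in $\mathcal{K}$, with $\lambda_\alpha \geq 0$ and $b_\alpha \in \mathcal{B}$, converging to some point $x$ in the ambient topological vector space, and show $x \in \mathcal{K}$. The trivial case $x = 0$ is handled by observing $0 = 0 \cdot b \in \mathcal{K}$ for any $b \in \mathcal{B}$, so the only substantive case is $x \neq 0$.

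The crux will be to show that the scalars $\lambda_\alpha$ are eventually bounded. I would argue this by contradiction: if not, pass to a subnet with $\lambda_\alpha \to \infty$. Then by joint continuity of scalar multiplication, $b_\alpha = \lambda_\alpha^{-1}(\lambda_\alpha b_\alpha) \to 0 \cdot x = 0$. Since $\mathcal{B}$ is compact and (assuming the TVS is Hausdorff, as is standard and as is the case in this paper's application) therefore closed, the limit $0$ would belong to $\mathcal{B}$, contradicting the hypothesis $0 \notin \mathcal{B}$. Hence for some $r > 0$, a tail of the net satisfies $\lambda_\alpha \in [0, r]$.

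Once boundedness is established, the rest is soft. The scalar multiplication map $\phi \colon [0, r] \times \mathcal{B} \to \text{TVS}$, $\phi(\lambda, b) = \lambda b$, is continuous, so its image $[0, r] \cdot \mathcal{B}$ is the continuous image of the compact product $[0, r] \times \mathcal{B}$, hence compact, hence closed. A tail of $(x_\alpha)$ lies in $[0, r] \cdot \mathcal{B}$, so the limit $x$ lies there as well, giving $x = \lambda b \in \mathcal{K}$ for some $\lambda \in [0, r]$ and $b \in \mathcal{B}$.

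The main obstacle is the boundedness step, and this is exactly where the hypothesis $0 \notin \mathcal{B}$ is indispensable: without it, a sequence $b_\alpha \to 0$ with $\lambda_\alpha \to \infty$ could produce a limit $x$ that fails to factor as $\lambda b$ with $b \in \mathcal{B}$, so $\mathcal{K}$ need not be closed (for instance, taking $\mathcal{B}$ to be a line segment through the origin shows the hypothesis cannot be dropped).
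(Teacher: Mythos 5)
Your proof is correct and is essentially the paper's argument recast in net language: both exclude unboundedly large scalars by combining $0 \notin \mathcal{B}$ with continuity of scalar multiplication, and both then conclude via compactness of $[0,r]\cdot\mathcal{B}$ as the continuous image of $[0,r]\times\mathcal{B}$. The paper phrases this as showing the complement of $\mathcal{K}$ is open with explicit neighborhoods $U_1$ and $U_2$, while you argue along a convergent net, but the two key ingredients and the roles they play are the same.
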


\begin{proof} 
  We shall show that the complement of $\mathcal{K}$ is open. Let
  $u\not\in \mathcal{K}$.  Since $0\not\in \mathcal{B}$, there is a
  neighborhood $W$ of $0$ that does not intersect $\mathcal{B}$. Let
  $U_1$ be a neighborhood of $u$, and $\delta>0$ such that
  $\alpha U_1 \subset W$ for all $|\alpha|<\delta$ (from the
  continuity of $(\alpha,x)\to \alpha x$ at $(0,u)$). Then
  $U_1\cap \lambda \mathcal{B} = \emptyset$ for all
  $\lambda>1/\delta$.  The image of the compact set
  $[0,1/\delta]\times \mathcal{B}$ by the continuous map
  $(\alpha,x)\to \alpha x$ is compact and is contained in
  $\mathcal{K}$. Hence there is a neighborhood $U_2$ of $u$ that does
  not intersect the image. Then the intersection $U_1\cap U_2$ is a
  neighborhood of $u$ that does not intersect $\mathcal{K}$ which
  proves that $\mathcal{K}$ is closed.
\end{proof}

\begin{proposition}
\label{prop:compact}
The set
\[
\mathcal{B} = \cl\conv\left\{\sum_{i=1}^N a_i \delta_{x_i} \otimes
  \sum_{i=1}^N a_i \delta_{x_i} : N \in \mathbb{N}, \; x_i \in V, \;
  a_i \geq 0,\;  \sum_{i=1}^N a_i = 1\right \}
\]
is weak* compact and equality
\[
\mathcal{CP}_V = \bigcup_{\lambda \geq 0} \lambda \mathcal{B}.
\]
holds. Hence the extreme rays of $\mathcal{CP}_V$ are precisely the
rays generated by the extreme points of $\mathcal{B}$.
\end{proposition}

\begin{proof}
  The set~$\mathcal{B}$ is closed by definition, so in order to prove
  the weak* compactness it suffices to show that~$\mathcal{B}$ is
  contained in a compact set. But this is clear since
\[
\mathcal{B} \subseteq \{\mu \in M(V \times V)_{\sym} : \mu(V \times V) \leq 1, \; \mu
\geq 0\}
\]
and the latter set is compact in the weak* topology by the Theorem of
Banach-Alaoglu.

Since $\mathcal{B}$ does not contain the origin, the union
$\bigcup_{\lambda \geq 0} \lambda \mathcal{B}$ is closed by
Lemma~\ref{l1} and so the desired equality follows by
Proposition~\ref{prop:closure}.
\end{proof}

We cite two results from Choquet theory, see Phelps~\cite{Phelps}.

\begin{theorem}[Milman's converse of the Krein-Milman
  theorem] \label{the:Milman} Suppose that $X$ is a compact convex
  subset of a locally convex space. Suppose further that
  $Z \subseteq X$, and that $X = \cl \conv Z$. Then the extreme points
  of $X$ are contained in the closure of $Z$, i.e.\
  $\ex X \subseteq \cl Z$.
\end{theorem}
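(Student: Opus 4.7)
The plan is to proceed by contradiction, using a covering argument rather than a direct Hahn-Banach separation, since $\cl Z$ need not be convex and its closed convex hull is all of $X$, which contains the point I want to isolate from $\cl Z$.

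Assume $x_0 \in \ex X$ and, for contradiction, $x_0 \notin \cl Z$. The first step uses the local convexity of the ambient space: for every $z \in \cl Z$ I select an open convex neighborhood $U_z$ of $z$ whose closure misses $x_0$. This is possible because the complement of $\{x_0\}$ is open and, in a locally convex Hausdorff space, the convex open neighborhoods of $z$ whose closure stays inside any prescribed open set about $z$ form a local base (using a convex balanced $W$ with $W+W$ inside the prescribed neighborhood). Since $\cl Z$ is closed in the compact set $X$, it is itself compact, so I can extract a finite subcover $\cl Z \subseteq U_{z_1} \cup \cdots \cup U_{z_n}$.

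Next set $K_i = X \cap \cl U_{z_i}$ for $i = 1, \ldots, n$. Each $K_i$ is compact and convex, and $Z \subseteq \cl Z \subseteq K_1 \cup \cdots \cup K_n$. The pivotal technical ingredient is that the convex hull of a finite union of compact convex sets is compact: it coincides with the continuous image of the compact product $\Delta^{n-1} \times K_1 \times \cdots \times K_n$ under the evaluation map $(\lambda, y_1, \ldots, y_n) \mapsto \sum_i \lambda_i y_i$. Being compact it is closed, so
\[
X = \cl \conv Z \subseteq \conv(K_1 \cup \cdots \cup K_n) \subseteq X,
\]
and in fact $X = \conv(K_1 \cup \cdots \cup K_n)$. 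Hence $x_0$ admits a representation $x_0 = \sum_i \lambda_i y_i$ with $y_i \in K_i$, $\lambda_i \geq 0$, and $\sum_i \lambda_i = 1$.

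To conclude, I invoke the extremality of $x_0$: a short induction shows that an extreme point of a convex set which equals a convex combination of finitely many of its points must coincide with every point appearing with positive weight. Therefore $x_0 = y_i$ for some $i$ with $\lambda_i > 0$, placing $x_0 \in K_i \subseteq \cl U_{z_i}$ and contradicting the choice of $U_{z_i}$. The main obstacle in this plan is the compactness of $\conv(K_1 \cup \cdots \cup K_n)$ together with its identification as the image of the simplex-product map; once that is granted, the covering step, the construction of the $U_z$, and the finite-term extremality argument are all routine.
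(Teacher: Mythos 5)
Your proof is correct, and there is nothing in the paper to compare it against: the paper does not prove this statement but cites it from Phelps \cite{Phelps}, and your argument is precisely the standard proof of Milman's theorem found there (and in Rudin's \emph{Functional Analysis}) --- cover the compact set $\cl Z$ by finitely many closed convex sets $K_i$ avoiding $x_0$, use that $\conv(K_1 \cup \cdots \cup K_n)$ is compact as the continuous image of $\Delta^{n-1} \times K_1 \times \cdots \times K_n$, conclude $X = \conv(K_1 \cup \cdots \cup K_n)$, and finish with the finite-convex-combination extremality argument. The only implicit points are the standard Hausdorff convention (needed so that $\{x_0\}$ is closed, $X$ is closed, and compact sets are closed) and the harmless step of discarding any empty $K_i$ before forming the simplex-product map; both are routine.
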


\begin{theorem}[Choquet] \label{the:Choquet} Suppose that $X$ is a
  metrizable compact convex subset of a locally convex space $E$, and
  let $x_0 \in X$. Then there exists a probability measure $P$ on~$X$
  which represents $x_0$, i.e.,
\[ 
u(x_0) = \int_X u(x) \, dP(x) \quad \text{ for every continuous linear
  functional $u$ on } E, 
\]
and is supported by the extreme points of~$X$, i.e.,
$P \big( X \setminus \ex X \big) = 0$.
\end{theorem}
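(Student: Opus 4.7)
The plan is to translate the statement about extreme rays of $\mathcal{CP}_V$ into a statement about extreme points of the weak* compact convex set $\mathcal{B}$ from Proposition~\ref{prop:compact}. Every nonzero element of $\mathcal{CP}_V$ is a nonnegative measure with positive total mass, and every element of $\mathcal{B}$ has total mass one, so $\mathcal{B}$ is a base of the cone and the extreme rays of $\mathcal{CP}_V$ correspond bijectively to the extreme points of $\mathcal{B}$. Since the theorem's conclusion is invariant under positive rescaling of $\mu$, it suffices to prove
\[
\ex \mathcal{B} = \left\{ \mu \otimes \mu : \mu \in M(V),\ \mu \geq 0,\ \mu(V) = 1 \right\}.
\]

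For the inclusion ``$\subseteq$'' the key point is that the map $\Phi \colon \mu \mapsto \mu \otimes \mu$, restricted to the weak* compact convex set of probability measures on $V$, is weak* continuous. This follows from the Stone--Weierstrass theorem: finite linear combinations of products $(x,y) \mapsto f(x) g(y)$ with $f, g \in C(V)$ are dense in $C(V \times V)$, and for such products weak* convergence $\mu_n \to \mu$ gives $\int f \, d\mu_n \int g \, d\mu_n \to \int f \, d\mu \int g \, d\mu$. Combining this continuity with the weak* density of discrete probability measures inside all probability measures yields
\[
\mathcal{B} = \cl \conv \left\{ \mu \otimes \mu : \mu \in M(V),\ \mu \geq 0,\ \mu(V) = 1 \right\}.
\]
Since $\Phi$ is also injective (a product measure is determined by its marginal), its image is the continuous injective image of a compact set, hence itself weak* closed. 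Milman's converse of the Krein--Milman theorem (Theorem~\ref{the:Milman}) therefore gives $\ex \mathcal{B} \subseteq \{\mu \otimes \mu : \mu \geq 0,\ \mu(V) = 1\}$.

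The reverse inclusion is the main obstacle and will be handled via Choquet's theorem. Fix a probability measure $\mu_0$ on $V$ and set $\nu_0 = \mu_0 \otimes \mu_0$. Since $V$ is compact metrizable, $C(V \times V)$ is separable and $\mathcal{B}$ is weak* metrizable, so Theorem~\ref{the:Choquet} supplies a probability measure $Q$ on $\mathcal{B}$, supported on $\ex \mathcal{B}$, representing $\nu_0$. Because $Q$ is concentrated on the image of $\Phi$, we pull $Q$ back through the homeomorphism $\Phi$ to obtain a probability measure $\tilde{Q}$ on the set of probability measures on $V$ satisfying $\nu_0 = \int \mu \otimes \mu \, d\tilde{Q}(\mu)$ in the weak sense. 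For an arbitrary $f \in C(V)$, testing this identity against the continuous symmetric kernels $K_1(x,y) = f(x) + f(y)$ and $K_2(x,y) = f(x) f(y)$ yields
\[
\int f \, d\mu_0 = \int \left( \int f \, d\mu \right) d\tilde{Q}(\mu) \quad \text{and} \quad \left( \int f \, d\mu_0 \right)^{\!2} = \int \left( \int f \, d\mu \right)^{\!2} d\tilde{Q}(\mu),
\]
so the real random variable $\mu \mapsto \int f \, d\mu$ has mean $\int f \, d\mu_0$ and equal second moment under $\tilde{Q}$; its variance is zero, forcing $\int f \, d\mu = \int f \, d\mu_0$ for $\tilde{Q}$-almost every $\mu$. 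By separability of $C(V)$ one selects a countable dense sequence in $C(V)$ and intersects the resulting null sets to conclude $\mu = \mu_0$ for $\tilde{Q}$-almost every $\mu$, whence $\tilde{Q} = \delta_{\mu_0}$ and consequently $Q = \delta_{\nu_0}$. Since $Q$ is supported on $\ex \mathcal{B}$, this forces $\nu_0 \in \ex \mathcal{B}$, completing the proof.
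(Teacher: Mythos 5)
There is a genuine mismatch here: the statement you were asked to prove is Choquet's theorem itself (Theorem~\ref{the:Choquet}), which the paper does not prove at all but imports as a classical result from Phelps \cite{Phelps}. What you have written is instead a proof of Theorem~\ref{th:extreme rays}, the characterization of the extreme rays of $\mathcal{CP}_V$ --- and in the middle of it you explicitly invoke Theorem~\ref{the:Choquet} to produce the representing measure $Q$. As a proof of the stated theorem your argument is therefore circular: it assumes exactly the statement to be established, and nothing in it addresses the actual content of Choquet's theorem, namely the existence, for each point $x_0$ of a metrizable compact convex set $X$, of a representing probability measure carried by $\ex X$.

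A proof of Choquet's theorem needs entirely different machinery. In the metrizable case one first shows that $\ex X$ is a $G_\delta$ (hence Borel) subset of $X$, so the condition $P(X \setminus \ex X) = 0$ is even meaningful; one then introduces the Choquet ordering on probability measures on $X$ (comparison of integrals of continuous convex functions), produces by a Zorn/compactness argument a measure representing $x_0$ that is maximal in this ordering (starting from $\delta_{x_0}$), and finally proves that maximal measures vanish off $\ex X$, typically via upper envelopes $\hat f$ of continuous functions together with separability of $C(X)$. None of these steps appears in your text. That said, the material you wrote is not wasted: as an argument for Theorem~\ref{th:extreme rays} it is essentially the paper's own proof (Milman's converse for the inclusion $\ex \mathcal{B} \subseteq \{\mu \otimes \mu\}$, then Choquet's theorem plus the zero-variance trick comparing $\mu(f)$ and $\mu(f)^2$ for the reverse inclusion), with the added care of checking weak* continuity of $\mu \mapsto \mu \otimes \mu$; it has simply been attached to the wrong statement.
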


Now we are ready to prove the main result of this section.

\begin {proof}[Proof of Theorem~\ref{th:extreme rays}]
We define the following two sets:
\[
\mathcal{M}^+_1(V) = \{\mu \in M(V) : \mu \geq 0, \mu(V)=1\},
\quad
\mathcal{K}_1 = \{\mu \otimes \mu : \mu \in \mathcal{M}^+_1(V)\}.
\]
We will show that
\[
\ex \cl \conv \mathcal{K}_1 = \mathcal{K}_1.
\]
The set $\mathcal{K}_1$ is weak* compact. Therefore, Milman's theorem
(Theorem~\ref{the:Milman}) gives the first inclusion
\[ 
\ex \cl \conv \mathcal{K}_1 \subseteq \mathcal{K}_1. 
\]
To show the converse, assume that $\mu \otimes \mu \in \mathcal{K}_1$
can be written as $\mu \otimes \mu = \tfrac{1}{2}(\nu_1+\nu_2)$ for
some $\nu_1, \nu_2 \in \cl \conv \mathcal{K}_1$.  Since
$\mathcal{K}_1$ is weak* compact and weak* metrizable, it follows from
Choquet's theorem (Theorem~\ref{the:Choquet}) that there exist
probability measures $P_1, P_2$ on $\mathcal{M}^+_1$ such that for all
$u \in \big(M(V\times V)_{\sym}\big)^*$ we have the representation
\begin{equation} \label{Eq:choqrepres}
  u(\nu_i) = \int_{\mathcal{M}^+_1} u(\rho \otimes \rho) \, dP_i(\rho),
 \quad i=1,2.
\end{equation}  
Setting $P := \tfrac{1}{2}P_1 + \tfrac{1}{2}P_2$, we conclude that for all $F \in C(V\times V)_{\sym}$,
\[ (\mu \otimes \mu)(F) = \int_{\mathcal{M}^+_1} (\rho \otimes
\rho)(F) \, dP(\rho).
\]
Since $V$ is a compact metrizable space, the space $C(V)$ of
continuous functions on $V$ is separable. Therefore, there exists a
countable dense subset $H$ of $C(V)_{\geq 0}$.

Take $f \in H$, let $\mathbf{1}_V$ be the constant function equal to $1$ on $V$, and consider
\[ F := \tfrac{1}{2}(f \otimes \mathbf{1}_V ) + \tfrac{1}{2}(\mathbf{1}_V \otimes f ). \] 
Then
\begin{equation} \label{Eq:mu}
   \mu(f) = (\mu \otimes \mu) (F) 
    = \int_{\mathcal{M}^+_1} (\rho \otimes \rho)(F) \, dP(\rho) 
=   \int_{\mathcal{M}^+_1} \rho(f) \, dP(\rho).
\end{equation}
Similarly, consider $F' = f \otimes f$ to obtain
\begin{equation} \label{Eq:musquare}
   \mu(f)^2 = (\mu \otimes \mu) (F') 
    = \int_{\mathcal{M}^+_1}(\rho \otimes \rho)(F') \, dP(\rho) 
    = \int_{\mathcal{M}^+_1} \rho(f)^2 \, dP(\rho).
\end{equation}
Now if $\mu(f)=0$, then~(\ref{Eq:mu}) gives that $\rho(f)=0$ $P$-almost everywhere.
If $\mu(f) > 0$, then combining~(\ref{Eq:mu}) and~(\ref{Eq:musquare}) gives
\[  \int_{\mathcal{M}^+_1} \frac{\rho(f)}{\mu(f)} \, dP(\rho) = 1
     = \int_{\mathcal{M}^+_1} \frac{\rho(f)^2}{\mu(f)^2} \, dP(\rho),
\]
which implies that there exists a set $N_f \subset \mathcal{M}^+_1$ with
$P(N_f) = 0$ such that $\rho (f) = \mu(f)$ for all $\rho \in
\mathcal{M}^+_1 \setminus N_f$. Set $N = \bigcup_{f \in H} N_f$ and
since $H$ is countable, we have $P(N) = 0$ and
\[ 
\rho (f) = \mu(f) \quad \text{for all } \rho\in \mathcal{M}^+_1 \setminus N \text{ and for all } f \in H. 
\]
As $H$ is dense in $C(V)_{\geq 0}$, we get $\rho = \mu$ for all $\rho  \in \mathcal{M}^+_1 \setminus N$.

Since $0 \le P_i(N) \le 2 P(N) = 0$, we obtain that for $i=1,2$ and
for all $F \in C(V \times V)_{\sym}$,
\begin{align*}
 \nu_i(F) &
   = \int_{\mathcal{M}^+_1 \setminus N} (\rho \otimes \rho)(F) \, dP_i(\rho) \\
   & = \int_{\mathcal{M}^+_1 \setminus N} (\mu \otimes \mu)(F) \, dP_i(\rho) \\
   & = (\mu \otimes \mu)(F) \int_{\mathcal{M}^+_1 \setminus N} dP_i(\rho) \\[.5ex]
   & = (\mu \otimes \mu)(F). 
\end{align*}
Hence, $\nu_1 = \nu_2 = \mu \otimes \mu$, which means that $\mu
\otimes \mu \in \ex \cl \conv \mathcal{K}_1 $, and the proof of the
converse inclusion is complete.

The theorem now follows from $\cl \conv \mathcal{K}_1 = \mathcal{B}$
and Proposition \ref{prop:compact}.
\end{proof}

\section{Copositive formulation for the stability number of infinite graphs}
\label{sec:formulation}

In order to develop our copositive formulation of the stability number we make
use of Kantorovich's approach to linear programming over cones in the
framework of locally convex topological vector spaces. This theory is
thoroughly explained in Barvinok~\cite[Chapter IV]{Barvinok} and we
follow his notation closely.

In Section~\ref{ssec:primaldual} we cast the copositive
problem~\eqref{eq:copositive} into the general framework of conic
problems as studied by Barvinok, and using this general theory, we
derive the dual of~\eqref{eq:copositive} which will turn out to be an
infinite-dimensional completely positive problem.  Then we prove our
main theorem, Theorem~\ref{th:main}, in two steps. In the first step,
we show in Section~\ref{ssec:completelypositive} that the stability
number of~$G$ equals the optimal value of the dual problem. In
particular we show that the optimum is attained. In the second step,
Section~\ref{ssec:copositive}, we establish the fact that there is no
duality gap between primal and dual. In Section~\ref{ssec:weighted} we
extend these results and give a copositive formulation for the
weighted stability number.

\subsection{Primal-dual pair}
\label{ssec:primaldual}

As before, let $G = (V,E)$ be a compact topological packing graph with
metrizable vertex set. 
For this graph, the copositive problem~\eqref{eq:copositive} can be seen as a 
general conic problem of the form
\begin{equation}
\label{eq:primal}
\begin{array}{rll}
\inf & \langle x, c \rangle_1\\
&  x \in \mathcal{K}, \; Ax = b
\end{array}
\end{equation}
with the following notations:
\begin{itemize}
\item[] $x = (t, K) \in \R \times C(V \times V)_{\sym}$\\[-1.5ex]
\item[] $c = (1,0) \in \R \times M(V \times V)_{\sym}$\\[-1.5ex]
\item[] $\langle \cdot, \cdot \rangle_1 : (\R \times C(V \times V)_{\sym})
  \times (\R \times M(V \times V)_{\sym}) \to \R$\\[-1.5ex]
\item[] $\mathcal{K} = \R_{\geq 0} \times \mathcal{COP}_V$\\[-1.5ex]
\item[] $A : \R \times C(V \times V)_{\sym} \to C(V) \times C(\overline{E})$
\item[] $\quad A(t,K) = (x \mapsto K(x,x) - t, (x,y) \mapsto K(x,y))$\\[-1.5ex]
\item[] $b = (-1,-1) \in C(V) \times C(\overline{E})$.
\end{itemize}
Here $\overline{E} = \{\{x,y\} : x \neq
y, \{x,y\} \not\in E\}$ is the complement of the edge set. 
Note that we can replace the constraint $t \in \R$ in~\eqref{eq:copositive} by
$t \in \R_{\geq 0}$, since $t \ge 1$ holds automatically
because diagonal elements of copositive kernels are nonnegative.

The dual problem of~\eqref{eq:primal} is
\begin{equation}
\label{eq:dual}
\begin{array}{rll}
\sup & \langle b, y \rangle_2 \\
& c - A^*y \in \mathcal{K}^*.
\end{array}
\end{equation}
Applying this to our setting, it is not difficult to see that we need
\begin{itemize}
\item[] $\langle \cdot, \cdot \rangle_2 : (C(V) \times C(\overline{E})) \times (M(V) \times M(\overline{E})) \to \R$\\[-1.5ex]
\item[] $y = (\mu_0, \mu_1) \in M(V) \times M(\overline{E})$\\[-1.5ex]
\item[] $A^* : M(V) \times M(\overline{E}) \to \R \times M(V \times V)_{\sym}$
\item[] $\quad A^*(\mu_0, \mu_1) = (-\mu_0(V), \mu_0 + \mu_1)$\\[-1.5ex]
\item[] $\mathcal{K}^* = \R_{\geq 0} \times \mathcal{CP}_V$.
\end{itemize}
The map $A^*$ is the adjoint of $A$ because
\begin{equation*}
\begin{split}
\langle A(t,K), (\mu_0,\mu_1) \rangle_2 & =  \int_V K(x,x) - t \, d\mu_0(x) +
\int_{\overline{E}} K(x,y) \, d\mu_1(x,y)\\
& =  -t\mu_0(V) + \int_{V \times V} K(x,y) \, d(\mu_0 + \mu_1)(x,y)\\
& = \langle (t,K), A^*(\mu_0, \mu_1) \rangle_1.
\end{split}
\end{equation*}
Above, when we add the measures~$\mu_0$ and~$\mu_1$, we consider them
as measures defined on the product space~$V \times V$, where we see
the measure~$\mu_0$ as a measure defined on the diagonal~$D = \{(x,x)
: x \in V\}$.

With this, the dual of~\eqref{eq:copositive} is the completely positive program
\begin{equation*}
\begin{array}{rll}
\sup & -\mu_0(D) -\mu_1(\overline{E}) \\
& \mu_0 \in M(D), \; \mu_1 \in M(\overline{E})\\
& 1 + \mu_0(D) \geq 0 \\
& -\mu_0 - \mu_1 \in \mathcal{CP}_V.\\
\end{array}
\end{equation*}
To simplify this dual, we define the \defi{support} of a measure~$\mu$
as follows:
\[
  \supp \mu = 
(V \times V) \setminus O,
\]
where $O$ is the inclusionwise largest open set with $\mu(O) = 0$.
Note that $O$ is given by
\[ 
O = \bigcup_{\small \begin{array}{c} W \text{ open in } V \times V \\
                      \mu(W) = 0 \end{array}} W. 
\]
Then the \defi{dual, completely positive program} equals
\begin{equation*}
\begin{array}{rll}
\sup & \mu(V \times V)\\
& \mu \in \mathcal{CP}_V \\
& \mu(D) \leq 1 \\
& \supp \mu \subseteq D \cup \overline{E}.
\end{array}
\end{equation*}
One can argue by scaling the inequality constraint $\mu(D)
\leq 1$ can be replaced by the equality
constraint $\mu(D) = 1$ and therefore we get
\begin{equation*}
\tag{D}
\label{eq:completelypositive}
\begin{array}{rll}
\sup & \mu(V \times V)\\
& \mu \in \mathcal{CP}_V \\
& \mu(D) = 1 \\
& \supp \mu \subseteq D \cup \overline{E}.
\end{array}
\end{equation*}

This completely positive program using measures is a generalization of
the finite-dimensional completely positive program for finite
graphs~$G = (V,E)$, with $V = \{1, \ldots, n\}$, of de Klerk,
Pasechnik \cite{DeKlerkPasechnik}:
\begin{equation*}
\begin{array}{rll}
\max & \sum\limits_{i=1}^n \sum\limits_{j=1}^n X(i,j) \\[.5ex]
& X \in \mathcal{CP}_n \\
& \sum_{i=1}^n X(i,i) = 1 \\
& X(i,j) = 0 & \text{for all $\{i,j\} \in E$.}
\end{array}
\end{equation*}

\subsection{Completely positive formulation}
\label{ssec:completelypositive}

We next show that the optimal value of
problem~\eqref{eq:completelypositive} equals the stability number.

\begin{theorem}
\label{th:completelypositive}
  Let $G = (V,E)$ be a compact topological packing graph. Then the optimal value of the
  completely positive program~\eqref{eq:completelypositive} is attained and
  equals~$\alpha(G)$.
\end{theorem}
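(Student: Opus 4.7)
The plan is to prove matching lower and upper bounds. For $\sup \geq \alpha(G)$ with attainment, I would pick a maximum stable set $S = \{x_1, \ldots, x_\alpha\}$ with $\alpha = \alpha(G)$ and exhibit
\[
\mu_S = \biggl(\sum_{i=1}^\alpha \tfrac{1}{\sqrt{\alpha}}\delta_{x_i}\biggr) \otimes \biggl(\sum_{i=1}^\alpha \tfrac{1}{\sqrt{\alpha}}\delta_{x_i}\biggr).
\]
By Proposition~\ref{prop:closure}, $\mu_S \in \mathcal{CP}_V$; stability of $S$ yields $\supp \mu_S \subseteq D \cup \overline{E}$; and a direct computation gives $\mu_S(D) = 1$ and $\mu_S(V \times V) = \alpha$, so $\mu_S$ is feasible with objective value $\alpha(G)$.

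For the reverse inequality, let $\mu$ be any feasible measure. Since $\mu(D) = 1 > 0$, the total mass $\mu(V \times V)$ is positive and $\mu' := \mu/\mu(V \times V)$ lies in the set $\mathcal{B}$ of Proposition~\ref{prop:compact}. Because $V$ is compact metrizable, $C(V \times V)_{\sym}$ is separable and $\mathcal{B}$ is a metrizable compact convex subset of $M(V \times V)_{\sym}$ in the weak* topology. Theorem~\ref{th:extreme rays} identifies its extreme points as $\mathcal{K}_1 = \{\rho \otimes \rho : \rho \in \mathcal{M}^+_1(V)\}$, and Choquet's theorem (Theorem~\ref{the:Choquet}) produces a probability measure $P$ supported on $\mathcal{K}_1$ with $\mu'(F) = \int(\rho \otimes \rho)(F)\,dP(\rho)$ for every $F \in C(V \times V)_{\sym}$. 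Approximating $\mathbf{1}_D$ from above and $\mathbf{1}_E$ from below by continuous functions on the metric space $V \times V$ and applying dominated convergence on both sides of the Choquet identity extends it to $\mathbf{1}_D$ and $\mathbf{1}_E$. The topological packing graph axiom makes $E$ open in $V \times V$ (each edge lies in an open clique, yielding an open product neighborhood of edges), so $\supp \mu \subseteq D \cup \overline{E}$ forces $\mu(E) = 0$; hence $(\rho \otimes \rho)(E) = 0$ for $P$-a.e.\ $\rho$, and openness of $E$ then implies $\supp(\rho \otimes \rho) = \supp\rho \times \supp\rho$ misses $E$, so $\supp \rho$ is a stable set. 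Finiteness of $\alpha(G)$ forces $\supp \rho$ to be finite with size $k_\rho \leq \alpha(G)$, and writing $\rho = \sum_i b_i \delta_{x_i}$ with $\sum b_i = 1$, the Cauchy-Schwarz inequality yields $(\rho \otimes \rho)(D) = \sum b_i^2 \geq 1/k_\rho \geq 1/\alpha(G)$. Integrating against $P$,
\[
\frac{1}{\mu(V \times V)} = \mu'(D) = \int (\rho \otimes \rho)(D)\,dP(\rho) \geq \frac{1}{\alpha(G)},
\]
which gives $\mu(V \times V) \leq \alpha(G)$.

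The main technical obstacle is legitimizing the Choquet representation for the discontinuous test functions $\mathbf{1}_D$ and $\mathbf{1}_E$ that encode the support and normalization constraints. This is handled by the monotone continuous approximation above, exploiting closedness of $D$ (diagonal in a Hausdorff space) and openness of $E$ (from the packing graph axiom), together with the metrizability of $V \times V$ to produce the required continuous approximants.
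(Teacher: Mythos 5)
Your argument is correct, and for the upper bound it takes a genuinely different route from the paper. The paper first proves that the feasible set $\mathcal{F}_D$ of \eqref{eq:completelypositive} is weak* compact (by embedding it into a Banach--Alaoglu-compact set $\mathcal{F}$), concludes that the supremum is attained at an extreme point of $\mathcal{F}_D$, identifies that maximizer as a finitely supported product measure over a stable set via the extreme rays of the subcone $\mathcal{M}_G = \{\mu \in \mathcal{CP}_V : \supp\mu \subseteq D \cup \overline{E}\}$, and finishes with Cauchy--Schwarz. You instead normalize an arbitrary feasible $\mu$ into the compact base $\mathcal{B}$, apply Choquet's theorem to write it as an integral over the extreme points $\rho \otimes \rho$, push the constraints $\mu(D)=1$ and $\mu(E)=0$ through the representation by monotone approximation of $\mathbf{1}_D$ (closed) and $\mathbf{1}_E$ (open by the packing-graph axiom), deduce that $P$-almost every $\rho$ is a probability measure on a finite stable set, and average the pointwise bound $(\rho\otimes\rho)(D) \ge 1/\alpha(G)$. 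What your route buys: the bound $\mu(V\times V) \le \alpha(G)$ holds for \emph{every} feasible $\mu$ directly, so attainment follows from the explicit construction alone, and you bypass the paper's steps of establishing $\mathcal{F}_D \subseteq \mathcal{F}$ and of passing from extreme rays of $\mathcal{M}_G$ to extreme points of the feasible set. What it costs is the extra measure-theoretic work of extending the Choquet identity from continuous kernels to the indicators of $D$ and $E$, which you correctly flag and which does go through using metrizability of $V \times V$ and dominated convergence (one should also note that $\rho \mapsto (\rho\otimes\rho)(D)$ and $\rho \mapsto (\rho\otimes\rho)(E)$ are Borel as pointwise limits of weak*-continuous functions, and that $\mu' \in \mathcal{B}$ because the total-mass functional is weak*-continuous and equal to $1$ on the generators of $\mathcal{B}$). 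Both arguments ultimately rest on the same two pillars: the identification $\ex\mathcal{B} = \mathcal{K}_1$ established in the proof of Theorem~\ref{th:extreme rays}, and the packing-graph property forcing stable supports to be finite.
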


\begin{proof}
Let $\lambda$ be the optimal value of~\eqref{eq:completelypositive}.
For the ease of notation we write $\alpha$ for $\alpha(G)$ in this proof.

Let $x_1, \ldots, x_{\alpha} \in V$ be a stable set in~$G$ of maximal cardinality. 
Then the measure
\begin{equation*}
\frac{1}{\alpha} \left(\sum_{i=1}^{\alpha} \delta_{x_i}\right) \otimes \left(\sum_{i=1}^{\alpha} \delta_{x_i}\right)
\end{equation*}
is a feasible solution of~\eqref{eq:completelypositive} with objective
value $\alpha$. Hence, $\lambda \geq \alpha$.

In order to prove the reverse inequality we first show that set
$\mathcal{F}_D$ of feasible solutions of~\eqref{eq:completelypositive}
is weak* compact. For this define
\begin{equation*}
\mathcal{F} = \{t(\mu_0 + \mu_1) : (\mu_0,\mu_1) \in S_1, t \in [1, \alpha]\},
\end{equation*}
where
\begin{equation*}
S_1 = \{(\mu_0, \mu_1) \in M(D) \times M(\overline{E}) : \mu_0 + \mu_1 \in \mathcal{CP}_V, \mu_0(D) + \mu_1(\overline{E}) \leq 1\}.
\end{equation*}
By Theorem of Banach-Alaoglu, the set $S_1$ is weak* compact, so
$\mathcal{F}$ is weak* compact as well.

Consider the convex cone
\begin{equation*}
\mathcal{M}_G = \{\mu \in \mathcal{CP}_V : \supp \mu \subseteq D \cup \overline{E}\}.
\end{equation*}
It follows from Theorem~\ref{th:extreme rays} that the extreme rays
of~$\mathcal{M}_G$ are product measures $\rho \otimes \rho$. Furthermore, since $G$ is a
topological packing graph, the
extreme rays of~$\mathcal{M}_G$ have to be of the form
\begin{equation}
\label{eq:feasiblerays}
\left(\sum_{i=1}^N a_i \delta_{x_i}\right) \otimes \left(\sum_{i=1}^N
  a_i \delta_{x_i}\right) \quad  \text{with} \quad a_i \geq 0,\; x_1, \ldots, x_N \text{ a stable set of $G$,}
\end{equation}
because the restriction of $\rho \otimes \rho$ to $D$ has finite support since for every point
$x \in V$ there is an open neighborhood $U$ of $x$ with
$(\rho \otimes \rho) (U \times U) \cap D = (\rho \otimes \rho) (\{(x,x)\})$.

Now let
\[
\mu = \left(\sum_{i=1}^N a_i \delta_{x_i}\right) \otimes \left(\sum_{i=1}^N
  a_i \delta_{x_i}\right) \in \mathcal{F}_D
\]
be a feasible solution of~\eqref{eq:completelypositive} which lies in
an extreme ray of~$\mathcal{M}_G$. We have
\[
\mu(V \times V) = \left(\sum_{i=1}^N a_i\right)^2 \quad \text{and} \quad
\mu(D) = \sum_{i=1}^N a_i^2 = 1.
\]
Write $\mu = \nu_0 + \nu_1$ with $\nu_0
\in M(D)$ and $\nu_1 \in M(\overline{E})$. Let $s$ be a real number
such that $s(\nu_0 + \nu_1)(V \times V) = 1$. Then setting $\mu_0 =
s\nu_0$, $\mu_1 = s \nu_1$, $t = \tfrac{1}{s}$, shows that $s \in [\tfrac{1}{\alpha}, 1]$ 
because of the  Cauchy-Schwartz inequality
\[
1 \geq s = \frac{1}{\left(\sum_{i=1}^N a_i\right)^2} \geq \frac{1}{N
  \sum_{i=1}^N a_i^2} = \frac{1}{N} \geq \frac{1}{\alpha}.
\]
Hence $\mu \in \mathcal{F}$, and consequently $\mathcal{F}_D \subseteq
\mathcal{F}$. This shows that $\mathcal{F}_D$  is weak* compact
because $\mathcal{F}_D$ is closed.

Because of this compactness, the supremum
of~\eqref{eq:completelypositive} is attained at an extreme point of $\mathcal{F}_D$.
Suppose $(\sum_{i=1}^N a_i
\delta_{x_i}) \otimes (\sum_{i=1}^N a_i
\delta_{x_i})$ is a maximizer of~\eqref{eq:completelypositive}. Then
again by Cauchy-Schwartz we get that
\begin{equation*}
\lambda = \left(\sum_{i=1}^N  a_i\right)^2 \leq N \sum_{i=1}^N a_i^2 =
N \leq \alpha,
\end{equation*}
and the claim of the theorem follows.
\end{proof}

\subsection{Copositive formulation}
\label{ssec:copositive}

In this section, we prove our main result, Theorem~\ref{th:main}, by showing that we have 
strong duality between~\eqref{eq:copositive} and~\eqref{eq:completelypositive}.

\begin{theorem}
\label{th:nodualitygap}
There is no duality gap between the primal copositive
program~\eqref{eq:copositive} and the dual completely positive
program~\eqref{eq:completelypositive}. In particular, the optimal
value of both programs equals~$\alpha(G)$.
\end{theorem}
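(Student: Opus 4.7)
Since Theorem~\ref{th:completelypositive} already gives that the supremum in~\eqref{eq:completelypositive} equals $\alpha(G)$ and is attained, it suffices to prove weak duality and then to exhibit, for each $\varepsilon > 0$, a feasible solution of~\eqref{eq:copositive} with objective value at most $\alpha(G) + \varepsilon$.

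Weak duality follows directly from the pairing. If $(t, K)$ is feasible for~\eqref{eq:copositive} and $\mu$ is feasible for~\eqref{eq:completelypositive}, then using $\supp \mu \subseteq D \cup \overline{E}$, $K|_D = t-1$, $K|_{\overline{E}} = -1$, and $\mu(D) = 1$, we obtain
\[
\langle K, \mu \rangle = (t-1)\mu(D) - \mu(\overline{E}) = (t-1) - \bigl(\mu(V\times V) - 1\bigr) = t - \mu(V\times V).
\]
Since $K \in \mathcal{COP}_V$ and $\mu \in \mathcal{CP}_V$, this pairing is nonnegative, so $t \geq \mu(V \times V)$; combined with Theorem~\ref{th:completelypositive}, the infimum in~\eqref{eq:copositive} is at least $\alpha(G)$.

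For the reverse inequality, write $\alpha := \alpha(G)$. The topological packing hypothesis provides every $x \in V$ with an open clique-neighborhood $U_x$, so $\bigcup_{x \in V}(U_x \times U_x)$ is an open neighborhood of the diagonal $D$ disjoint from $\overline{E}$; hence $D$ and $\overline{E}$ are disjoint closed subsets of the compact metric space $V \times V$. Urysohn's lemma then yields a continuous $\phi : V \times V \to [0,1]$ with $\phi|_D = 1$ and $\phi|_{\overline{E}} = 0$, which we may symmetrize by averaging $\phi(x,y)$ and $\phi(y,x)$. The natural candidate is $K_\varepsilon := (\alpha + \varepsilon)\phi - 1$: it satisfies $K_\varepsilon(x,x) = (\alpha+\varepsilon) - 1$ and $K_\varepsilon(x,y) = -1$ on $\overline{E}$, so $(\alpha+\varepsilon, K_\varepsilon)$ is feasible for~\eqref{eq:copositive} provided $K_\varepsilon$ is copositive.

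The hard step is precisely this copositivity check. By Lemma~\ref{lem:pfender} it reduces to verifying the continuous Motzkin--Straus-type inequality
\[
(\alpha + \varepsilon)\sum_{i,j=1}^{N}\phi(x_i,x_j) \geq N^2 \quad \text{for every finite tuple } x_1,\ldots,x_N \in V.
\]
This is the main obstacle: a generic Urysohn $\phi$ can take values arbitrarily close to $0$ on edges that lie close to $\overline{E}$, so a straightforward appeal to the finite Motzkin--Straus bound applied to the induced subgraph of $\{x_1, \ldots, x_N\}$ may leave too little room. I expect to resolve this by refining the construction of $\phi$, for instance by building it relative to a slight thickening $\overline{E}_\delta \supset \overline{E}$ so that $\phi$ is close to $1$ on all genuine edges of $G$, and then using the $\varepsilon$-slack in $(\alpha+\varepsilon)$ together with finite Motzkin--Straus to absorb the deficit coming from near-boundary edges. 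Letting $\varepsilon \to 0$ then gives that the infimum in~\eqref{eq:copositive} is at most $\alpha(G)$, closing the duality gap.
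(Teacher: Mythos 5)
Your weak-duality computation is correct, and together with Theorem~\ref{th:completelypositive} it correctly reduces the theorem to exhibiting primal feasible solutions with objective value close to $\alpha(G)$. But that second half is not a proof: the copositivity of $K_\varepsilon = (\alpha+\varepsilon)\phi - 1$, which you rightly identify as the crux, is left open (``I expect to resolve this by refining the construction''), and the sketched fix does not obviously close the gap. The obstruction is precisely the one you name: a continuous $\phi$ vanishing on $\overline{E}$ must be near $0$ on edges close to $\cl\overline{E}$, so for points $x_1,\ldots,x_N$ forming a clique all of whose pairs lie near $\cl\overline{E}$ one only gets $\sum_{i,j}\phi(x_i,x_j)\approx N$, and the required inequality degenerates to $N\le\alpha+\varepsilon$. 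To rescue this via a thickening you would need a lemma asserting that deleting from $E$ all edges lying in a sufficiently small neighborhood of $\cl\overline{E}$ does not increase the stability number. That is a genuine statement about compact topological packing graphs requiring its own compactness argument (a limit of $(\alpha+1)$-point sets pairwise in $\overline{E}\cup O_n$ must be shown to produce an $(\alpha+1)$-point stable set of $G$; the limit pairs a priori only land in $\cl\overline{E}$, which may contain edges, and one must also rule out collisions of points). None of this is supplied, so the hard direction of the theorem is missing.

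The paper takes a different route that avoids constructing primal solutions altogether: it verifies Barvinok's closed-cone criterion for zero duality gap, namely that the cone $\{(d-A^*y,\langle b,y\rangle_2) : y \in M(V)\times M(\overline{E}),\ d\in\R_{\ge 0}\times\mathcal{CP}_V\}$ is closed. This follows from the compact base of $\mathcal{CP}_V$ obtained in Proposition~\ref{prop:compact} together with Lemmas~\ref{l1} and~\ref{l2}, once one checks that the kernel of the map $T(d,y)=(d-A^*y,\langle b,y\rangle_2)$ meets the cone only in the origin. If you wish to pursue your constructive approach, the missing ingredient is the stability of $\alpha$ under the thickening of $\overline{E}$ described above; as written, your proposal establishes only weak duality.
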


For the proof of this theorem we make use of a variant of the zero duality gap
theorem of a primal-dual pair of conic linear programs, see Barvinok
\cite[Chapter IV.7.2]{Barvinok}:
By dualizing the statement of \cite[Problem 3 in Chapter
IV.7.2]{Barvinok} we see that if the cone
\[
\{(d - A^* y, \langle b, y \rangle_2) : y \in M(V) \times
M(\overline{E}), d \in \mathbb{R}_{\geq 0} \times \mathcal{CP}_V\}
\]
is closed in $\mathbb{R} \times M(V \times V)_{\sym} \times \mathbb{R}$, then
there is no duality gap.

\medskip

To show this closedness condition we need again Lemma~\ref{l1} and the
following lemma which is a slight modification of
\cite[Lemma~IV.7.3]{Barvinok}.

\begin{lemma}\label{l2}
  Let $V$ and $W$ be topological vector spaces, let
  $\mathcal{K} \subseteq V$ be a cone such that there is a compact set
  $\mathcal{B} \subseteq V$ with $0\not\in \mathcal{B}$ and
  $\mathcal{K} = \bigcup_{\lambda \geq 0} \lambda \mathcal{B}$.  Let
  $T \colon V \to W$ be a continuous linear transformation such that
  $\ker T \cap \mathcal{K} = \{0\}$. Then $T(\mathcal{K}) \subseteq W$ is a closed convex
  cone.
\end{lemma}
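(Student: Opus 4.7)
The plan is to reduce the closedness of $T(\mathcal{K})$ to an application of the preceding Lemma~\ref{l1}, exploiting the fact that $\mathcal{K}$ itself is described as the union of positive scalings of a compact set $\mathcal{B}$ avoiding $0$. The first step is the tautological rewriting: since $T$ is linear, the image of a cone is a cone, and more specifically
\[
T(\mathcal{K}) = T\Big(\bigcup_{\lambda \geq 0} \lambda \mathcal{B}\Big) = \bigcup_{\lambda \geq 0} \lambda T(\mathcal{B}).
\]
So if I set $\mathcal{B}' := T(\mathcal{B}) \subseteq W$, then $T(\mathcal{K}) = \bigcup_{\lambda \geq 0} \lambda \mathcal{B}'$ and it suffices, by Lemma~\ref{l1} applied in $W$, to check that $\mathcal{B}'$ is compact and does not contain~$0$.

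Compactness is immediate: $\mathcal{B}$ is compact in $V$ and $T$ is continuous, so $T(\mathcal{B})$ is compact in~$W$. The key step — and the only place where the hypothesis $\ker T \cap \mathcal{K} = \{0\}$ enters — is verifying $0 \not\in T(\mathcal{B})$. Since $\mathcal{B} \subseteq \mathcal{K}$ and $0 \notin \mathcal{B}$, every $b \in \mathcal{B}$ satisfies $b \in \mathcal{K} \setminus \{0\}$, hence $b \notin \ker T$ by the assumption, i.e.\ $T(b) \neq 0$. Thus $0 \notin T(\mathcal{B})$.

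With these two facts in hand, Lemma~\ref{l1} (applied in $W$ to the compact set $\mathcal{B}'$) gives at once that $T(\mathcal{K}) = \bigcup_{\lambda \geq 0} \lambda \mathcal{B}'$ is closed. The cone property of $T(\mathcal{K})$ is trivial from its description, and convexity is inherited from $\mathcal{K}$ via linearity of $T$ (in the intended application, $\mathcal{K}$ is a convex cone). I do not foresee a real obstacle: the content of the lemma is precisely that the hypothesis $\ker T \cap \mathcal{K} = \{0\}$ is exactly what is needed to transport the bounded-away-from-zero property from $\mathcal{B}$ to~$T(\mathcal{B})$, after which Lemma~\ref{l1} finishes the argument.
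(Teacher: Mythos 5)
Your proof is correct and follows essentially the same route as the paper: set $\mathcal{B}' = T(\mathcal{B})$, note it is compact and avoids $0$ (the latter via $\ker T \cap \mathcal{K} = \{0\}$), and apply Lemma~\ref{l1}. Your parenthetical remark that convexity of $T(\mathcal{K})$ really requires $\mathcal{K}$ to be convex is a fair observation — the paper's statement only hypothesizes a ``cone'' but its proof likewise just asserts convexity as obvious, and in the intended application $\mathcal{K}$ is indeed convex.
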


\begin{proof}
  Obviously $T(\mathcal{K})$ is a convex cone.  The set
  $\mathcal{B}' = T(\mathcal{B})$ is compact, $0 \notin \mathcal{B}'$,
  and
  $T(\mathcal{K}) = \bigcup_{\lambda \geq 0} \lambda
  \mathcal{B}'$.
  Applying Lemma \ref{l1} gives that $T(\mathcal{K})$ is closed.
\end{proof}

Now we are ready for the proof of the theorem:

\begin{proof}[Proof of Theorem~\ref{th:nodualitygap}]
Consider the continuous linear
transformation
\[
T(d, y) = (d - A^* y, \langle b, y \rangle_2).
\]
We have already seen that the cone has a compact base. Suppose $(d,
y)$ lie in the kernel of~$T$. Then the condition $\langle b, y \rangle_2 = 0$
forces $y$ to be zero. This forces $d = 0$ and we can apply
Lemma~\ref{l2} to complete the proof of the theorem.
\end{proof}

\subsection{Copositive formulation for the weighted stability number}
\label{ssec:weighted}

In some situations one wishes to consider packing problems with
different types of objects, having different sizes; for instance the
problem of packing spherical caps having different radii as considered
by de~Laat, Oliveira, and Vallentin~\cite{LaatOliveiraVallentin}. In
these cases it is helpful to use a weighted version of the copositive
problem formulation which is presented in the next theorem. We omit
its proof here since it is completely analogous to the one of
Theorems~\ref{th:nodualitygap} and~\ref{th:completelypositive}.  The
only difference is that we are now given a continuous weight function
$w : V \to \mathbb{R}_{\geq 0}$ for the vertex set, and in our
optimization problems we replace the objective function
\[
\mu(V \times V) =
\int_V \int_V \, d\mu(x,y)
\qquad \text{by} \qquad
\int_V \int_V \sqrt{w(x)w(y)} \, d\mu(x,y).
\]

\begin{theorem}
  Let $G = (V, E)$ be a compact topological packing graph and let
  $w : V \to \mathbb{R}_{\geq 0}$ be a continuous weight function for
  the vertex set. Then the weighted stability number $\alpha_w(G)$
  defined by
\[
\alpha_w(G) = \max\left\{\sum_{x \in S} w(x) : \text{$S$ stable set of $G$}\right\}
\]
has the following copositive formulation
\begin{equation} \label{eq:mwcp}
\begin{array}{rll}
\alpha_w(G) = \inf & t\\
&  t \in \R,\; K \in \mathcal{COP}_V \\
& K(x,x) = t-w(x) & \text{for all $x \in V$} \\
& K(x,y) = -\sqrt{w(x)w(y)} & \text{for all $\{x,y\} \not\in E$.}
\end{array}
\end{equation}
\end{theorem}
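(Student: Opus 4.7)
The plan is to follow the two-step proof of Theorems~\ref{th:completelypositive} and~\ref{th:nodualitygap} line by line, inserting the weight function at the appropriate places. First I would recast~\eqref{eq:mwcp} as a conic program exactly as in Section~\ref{ssec:primaldual}, the only modification being that the right-hand side becomes $b = (-w, -\hat w) \in C(V) \times C(\overline E)$, where $\hat w(x,y) = \sqrt{w(x)w(y)}$. Since $A$, its adjoint $A^*$, and the cones $\mathcal K$, $\mathcal K^*$ are unchanged, the same calculation of $\langle b, y\rangle_2$, together with the substitution $\mu = -(\mu_0 + \mu_1)$ and the usual scaling argument, produces the weighted dual
\begin{equation*}
\begin{array}{rll}
\sup & \displaystyle\int_{V\times V}\sqrt{w(x)w(y)}\,d\mu(x,y) \\
     & \mu \in \mathcal{CP}_V,\; \mu(D) = 1,\; \supp \mu \subseteq D \cup \overline E,
\end{array}
\end{equation*}
where the diagonal contribution $\int_V w\,d\mu|_D$ arises from $\sqrt{w(x)w(x)} = w(x)$.

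Second, I would show that this dual attains $\alpha_w(G)$. For the lower bound, pick a maximum-weight stable set $\{x_1,\ldots,x_N\}$ and set $a_i = \sqrt{w(x_i)/\alpha_w(G)}$; the product measure $\mu = (\sum_i a_i\delta_{x_i})\otimes(\sum_i a_i\delta_{x_i})$ is feasible with $\mu(D) = \sum_i a_i^2 = 1$ and objective $(\sum_i a_i\sqrt{w(x_i)})^2 = \alpha_w(G)$. For the upper bound I would rerun the weak* compactness argument of Theorem~\ref{th:completelypositive} verbatim --- the scaling bound $\alpha$ is simply replaced by $\alpha_w(G)$, which is finite because $w$ is continuous on the compact set~$V$ and $\alpha(G)$ is finite --- so that the supremum is attained at an extreme point which, by Theorem~\ref{th:extreme rays} and the packing graph property, has the form~\eqref{eq:feasiblerays} for some stable set $\{x_1,\ldots,x_N\}$ with $\sum_i a_i^2 = 1$. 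Cauchy--Schwarz then gives
\begin{equation*}
\Bigl(\sum_{i=1}^N a_i\sqrt{w(x_i)}\Bigr)^{\!2} \leq \Bigl(\sum_{i=1}^N a_i^2\Bigr)\Bigl(\sum_{i=1}^N w(x_i)\Bigr) \leq \alpha_w(G),
\end{equation*}
which is the desired upper bound.

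Third, strong duality follows from the closedness argument of Theorem~\ref{th:nodualitygap} via Lemmas~\ref{l1} and~\ref{l2}: assuming without loss of generality that $w$ is strictly positive on $V$ (which we may do by deleting zero-weight vertices, an operation that changes neither $\alpha_w(G)$ nor either of the programs), the cone $\R_{\geq 0}\times\mathcal{CP}_V$ has a compact base, and on the kernel of $T(d,y) = (d - A^*y, \langle b, y\rangle_2)$ the equations $d - A^*y = 0$ and $\langle b, y\rangle_2 = 0$, together with the nonnegativity of completely positive measures and their disjoint restrictions to $D$ and $\overline E$, force $\mu_0 = \mu_1 = 0$ and hence $d = 0$.

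The main obstacle --- and the only genuine departure from the unweighted proof --- is the Cauchy--Schwarz step, which now takes the weighted form $(\sum_i a_i\sqrt{w(x_i)})^2 \leq (\sum_i a_i^2)(\sum_i w(x_i))$ rather than $(\sum_i a_i)^2 \leq N\sum_i a_i^2$. All remaining steps go through verbatim once the weighting has been inserted into $b$ and into the dual objective.
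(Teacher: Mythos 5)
Your proposal is precisely the ``completely analogous'' argument the paper has in mind when it omits this proof: the same conic recasting with $b=(-w,-\hat w)$, the same weighted dual with objective $\int\sqrt{w(x)w(y)}\,d\mu(x,y)$ and unchanged normalization $\mu(D)=1$, the same feasible product measure built from a maximum-weight stable set, the same compactness and extreme-ray analysis, and the weighted Cauchy--Schwarz inequality $\bigl(\sum_i a_i\sqrt{w(x_i)}\bigr)^2\le\bigl(\sum_i a_i^2\bigr)\bigl(\sum_i w(x_i)\bigr)$ is exactly the right replacement for $(\sum_i a_i)^2\le N\sum_i a_i^2$. Two points need repair, though. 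A minor one: in the compactness step the mass bound on feasible solutions should remain $\mu(V\times V)\le\alpha(G)$, not $\alpha_w(G)$ --- the normalization $\mu(D)=1$ is unweighted and unchanged, and the estimate $(\sum_i a_i)^2\le N\le\alpha(G)$ does not involve $w$ at all. Indeed $\alpha_w(G)$ can be strictly smaller than $\alpha(G)$ (even zero), in which case your interval $[1,\alpha_w(G)]$ would be empty and the inclusion $\mathcal F_D\subseteq\mathcal F$ would fail; any finite bound works, but it is $\alpha(G)$ that the argument delivers.

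The more substantive gap is your reduction to $w>0$. Deleting the zero-weight vertices leaves the open set $\{x:w(x)>0\}$, which is in general not compact, so the resulting graph is not a compact topological packing graph and none of the machinery (Banach--Alaoglu, the extreme-ray theorem, the compact base) applies to it; as stated this WLOG is not available. Fortunately the kernel of $T$ is trivial even when $w$ has zeros, but for a slightly different reason than the paper's one-liner: from $d=A^*y\in\R_{\ge0}\times\mathcal{CP}_V$ one gets $\mu_0+\mu_1\ge0$, hence $\mu_0\ge0$ and $\mu_1\ge0$ (they live on the disjoint sets $D$ and $\overline E$), and $-\mu_0(V)=d_1\ge0$ then forces $\mu_0=0$ \emph{without} using $\langle b,y\rangle_2=0$. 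What remains is a completely positive measure $\mu_1$ with $\supp\mu_1\subseteq D\cup\overline E$ and $\mu_1(D)=0$; by the description~\eqref{eq:feasiblerays} of the generators of $\mathcal M_G$, every nonzero generator has strictly positive diagonal mass $\sum_i a_i^2>0$, so $\mu_1(D)=0$ forces $\mu_1=0$. You should replace the WLOG by an argument of this kind (or, alternatively, by a limiting argument over the compact sets $\{x:w(x)\ge\varepsilon\}$ or over the weights $w+\varepsilon$). With those two adjustments the proof is correct and faithful to the paper's intended route.
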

For the finite case, Bomze~\cite{Bomze1998} showed that the maximum
weight clique problem can be formulated as a standard quadratic
problem. With the techniques
from~\cite{BomzeDuerDeKlerkQuistRoosTerlaky} this in turn can be
written as a copositive problem of which~\eqref{eq:mwcp} is the
infinite counterpart.

\section{Copositive formulation of the kissing number}
\label{sec:kissing}

In this section we give a copositive formulation of the kissing number problem. We
show that in this case the copositive program can be equivalently
transformed into a semi-infinite linear program. We start with
the original copositive formulation:
\begin{equation*}
\begin{array}{rll}
\inf & t\\
& t \in \mathbb{R}, \; K \in \mathcal{COP}_{S^{n-1}} \\
& K(x,x) = t - 1 & \text{for all $x \in S^{n-1}$} \\
& K(x,y) = -1 & \text{for all $x, y \in S^{n-1}$ with $x^{\sf T} y \in [-1, 1/2]$.}
\end{array}
\end{equation*}
Since the packing graph is invariant under the orthogonal group, also
the copositive formulation is invariant under this group.  By
convexity we can restrict the copositive formulation above to
copositive kernels which are invariant under the orthogonal group. So
$K(x,y)$ only depends on the inner product $x^{\sf T} y$. 

By Stone-Weierstrass we know that polynomials lie dense in
$C([-1,1])$, so we approximate $K(x,y)$ by
$\sum_{k=0}^d c_k (x^{\sf T} y)^k$. Then by Lemma~\ref{lem:pfender},
the copositivity condition $K \in \mathcal{COP}_{S^{n-1}}$ translates
to
\begin{equation}
\label{eq:copositiveconstraint}
  \sum_{i=1}^N \sum_{j=1}^N \sum_{k = 0}^d c_k (x_i^{\sf T} x_j)^k 
  \geq 0 \quad \text{for all $N \in \mathbb{N}$ and $x_1, \ldots, x_N \in S^{n-1}$}.
\end{equation}
The other constraints of the above copositive problem translate
likewise, and observing that $x^{\sf T} x = 1$ for $x \in S^{n-1}$, we
get the following semi-infinite linear program whose optimal value
converges to the kissing number if the degree $d$ tends to infinity:
\begin{equation*}
\begin{array}{rll}
\inf & 1 + \sum_{k = 0}^d c_k\\
& c_0, \ldots, c_d \in \R\\[.5ex]
& \sum\limits_{i=1}^N \sum\limits_{j=1}^N \sum\limits_{k = 0}^d c_k
  (x_i^{\sf T} x_j)^k \geq
0 & \text{for all $N \in \mathbb{N}$ and $x_1, \ldots, x_N \in S^{n-1}$}\\[.5ex]
& \sum\limits_{k = 0}^d c_k s^k \leq -1 & \text{for all $s \in [-1,1/2]$.}
\end{array}
\end{equation*}
We impose the condition $\sum_{k = 0}^d c_k s^k \leq -1$ instead of
$\sum_{k=0}^d c_k s^k = -1$ to make the problem feasible for finite
degree~$d$. It is easy to see that this relaxation is not effecting
the optimal value when $d$ tends to infinity.

Note here that all the difficulty of the problem lies in the
copositivity constraint~\eqref {eq:copositiveconstraint}.  In contrast
to this, the other constraint $\sum_{k=0}^d c_k s^k \leq -1$ for all
$s \in [-1,1/2]$ is computationally relatively easy. Although it gives
infinitely many linear conditions on the coefficients~$c_k$, it can be
modeled equivalently as a semidefinite constraint using the sums of
squares techniques for polynomial optimization; see for instance
Parrilo~\cite{Parrilo} and Lasserre~\cite{Lasserre}.

If, instead of requiring copositivity of the invariant kernel
\[
(x,y) \mapsto \sum_{k=0}^d c_k (x^{\sf T} y)^k
\]
we impose the weaker constraint that this kernel should be positive
semidefinite, then things become considerably simpler. Using harmonic
analysis on the unit sphere, by Schoenberg's
theorem~\cite{Schoenberg}, one can identify this class of kernels
explicitly, namely these are the kernels which can be written as
\[
(x,y) \mapsto \sum_{k=0}^d g_k P_k^{((n-3)/2,(n-3)/2)}(x^{\sf T} y)
\quad \text{with} \quad g_0, \ldots, g_d \geq 0,
\]
where $P_k^{((n-3)/2,(n-3)/2)}$ is the Jacobi polynomial of degree~$k$
with parameters $((n-3)/2, (n-3)/2)$. Thus requiring this weaker
constraint instead of the copositivity constraints yields the linear
programming bound for the kissing number due to Delsarte, Goethals,
and Seidel~\cite{DelsarteGoethalsSeidel1977}. This bound is known to
be tight in a few cases only, namely for $n = 1, 2, 8, 24$.

\section{Future work}

In this paper we gave a copositive formulation of the stability number
of compact topological packing graphs. This condition guarantees in
particular that all stable sets are finite. Sometimes one is also
interested in stable sets which are infinite, measurable sets. For
instance, what is the measurable stability number of the graph on the
unit sphere where two vertices are adjacent whenever they are
orthogonal?  Semidefinite relaxations for problems of this kind have
been proposed by Bachoc, Nebe, Oliveira, and Vallentin
\cite{BachocNebeOliveiraVallentin}. However, the bound which one can
obtain by this method is very weak for the orthogonality graph on the
unit sphere, and it is difficult to find additional valid inequalities
to improve it significantly (see DeCorte and Pikhurko
\cite{DeCorte2015a} for the case $n = 3$).  For this reason we think
that it would be interesting to derive a copositive formulation for
this problem to be able to derive stronger bounds.

\section*{Acknowledgements}

We would like to thank Christine Bachoc and Evan DeCorte for very helpful
discussions. We would also like to thank the anonymous referees whose
comments helped to improve the presentation of the paper.

\end{document}